\newtheorem{theorem}{Theorem}
\newtheorem{lemma}[theorem]{Lemma}
\newtheorem{corollary}[theorem]{Corollary}
\theoremstyle{definition}
\newtheorem{definition}[theorem]{Definition}
\newtheorem{algorithm}[theorem]{Algorithm}
\newtheorem{remark}[theorem]{Remark}
\newtheorem*{ack}{Acknowledgements}
\newcommand{\eps}{\varepsilon}
\newcommand{\cc}{\mathrm{c}}
\newcommand{\cB}{\mathcal{B}}
\newcommand{\cF}{\mathcal{F}}
\renewcommand{\Pr}{\mathbb{P}}
\renewcommand{\le}{\leqslant}
\renewcommand{\ge}{\geqslant}
\DeclareMathOperator{\aut}{aut}
\newcommand\hH{\widetilde{H}}
\newcommand\hsH{\widehat{H}}
\newcommand\tQ{\widehat{Q}}
\begin{document}

\title{Random cliques in random graphs and sharp thresholds for $F$-factors}
\author{Oliver Riordan%
\thanks{Mathematical Institute, University of Oxford, Radcliffe Observatory Quarter, Woodstock Road, Oxford OX2 6GG, UK.
E-mail: {\tt riordan@maths.ox.ac.uk}.}}
\date{June 9, 2022}

\maketitle

\begin{abstract}
  We show that for each $r\ge 4$,
  in a density range extending up to, and slightly beyond, the threshold for a $K_r$-factor,
  the copies of $K_r$ in the random graph $G(n,p)$ are randomly distributed, in the (one-sided) sense
  that the hypergraph that they form contains a copy of a binomial random hypergraph with
almost exactly the right density. Thus Jeff Kahn's recent asymptotically sharp bound for the
threshold in Shamir's hypergraph matching problem implies
a corresponding bound for the threshold for $G(n,p)$ to contain a $K_r$-factor. The case $r=3$ is more difficult, and has been settled by Annika Heckel.
We also prove a corresponding result for $K_r^{(t)}$-factors in random $t$-uniform hypergraphs, as well as (in some cases weaker) generalizations replacing $K_r$ by certain other (hyper)graphs.
\end{abstract}

\section{Introduction and results}

For $r\ge 2$, $n\ge 1$ and $0\le p\le 1$, let $H_r(n,p)$ be the random hypergraph with
vertex set $[n]=\{1,2,\ldots,n\}$ in which each of the $\binom{n}{r}$ possible hyperedges
is present independently with probability $p$. Let $G(n,p)=H_2(n,p)$ be the usual
binomial (or Erd\H os--R\'enyi) random graph. An event (formally a sequence of events
indexed by $n$) holds \emph{with high probability} or \emph{whp} if its probability
tends to $1$ as $n\to\infty$.

According to Erd\H os~\cite{Erdos},
in 1979 Shamir posed the following extremely natural question (for $r=3$): how large should
$p=p(n)$ be for $H_r(n,p)$ to whp contain a perfect matching, i.e., a set of disjoint hyperedges
covering all vertices? (Of course, we assume implicitly that $r|n$.)
A related question is: given a fixed graph $F$, how large must $p$ be for 
$G(n,p)$ to whp contain an $F$-factor, i.e., a set of vertex-disjoint copies of $F$ covering
all vertices of $G$? This question was posed, and a conjecture for the answer was given,
by Ruci\'nski~\cite{Rucinski} and by Alon and Yuster~\cite{AY}.

After a number of partial results
on one or both of these questions, including~\cite{SS,Rucinski,AY,Kriv97,Kim2003}, they were solved up
to a constant factor in $p$ at the same time, and by the same method, 
in the seminal paper of Johansson, Kahn and Vu~\cite{JKV}. Although they
were solved together, one question \emph{appears} to be much simpler, and one might
wonder whether one question can be reduced to the other.
The main aim of this paper is to show that the answer is yes, in the following sense.

\begin{theorem}\label{th1}
Let $r\ge 4$ be given. There exists some $\eps=\eps(r)>0$ such that, for any $p=p(n)\le n^{-2/r+\eps}$,
the following holds. For some $\pi=\pi(n)\sim p^{\binom{r}{2}}$,
we may couple the random graph $G=G(n,p)$ with the random hypergraph $H=H_r(n,\pi)$ so that,
whp, for every hyperedge in $H$ there is a copy of $K_r$ in $G$ with the same vertex set.
\end{theorem}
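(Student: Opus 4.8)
The goal is to build, on one probability space, $G\sim G(n,p)$ and $H\sim H_r(n,\pi)$ with a suitable $\pi=(1-\delta)p^{m}$, where $m=\binom{r}{2}$ and $\delta=\delta(n)\to0$ slowly (so $\pi\sim p^{m}$), such that whp $H\subseteq\cB(G)$, where $\cB(G)$ denotes the $r$‑uniform hypergraph of vertex sets of copies of $K_r$ in $G$. By a standard coupling argument (Strassen) it is enough to show that the ``demanded graph'' $G^{*}(H):=\bigcup_{S\in H}K_{S}$ — turn every hyperedge of $H$ into a clique — is stochastically dominated by $G(n,p')$ for $p':=(1-\delta/(2m))p$, off an event of probability $o(1)$; adding a fresh $G(n,p'')$ with $(1-p)=(1-p')(1-p'')$ then passes from $G(n,p')$ to $G(n,p)$. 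Concretely I would realise this by exposing the edges of $K_n$ in a carefully chosen order and, as each $r$‑set $S$ becomes resolved, deciding $S\in H$ with an independent acceptance coin calibrated so that $\Pr(S\in H\mid\text{history})=\pi$ at every step; this makes $H$ exactly binomial and $H\subseteq\cB(G)$ automatically, and the calibration is possible precisely when the conditional probability that $S$ spans, or can be forced to span, a $K_r$ stays $\ge\pi$ throughout.

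Before anything else I would pin down the structure of $\cB(G(n,p))$ and of $G^{*}(H)$ in this range. The numerical fact driving everything is that $p^{m-1}n^{r-2}\to0$: a fixed edge lies, in expectation, in no copy of $K_{r}$ — which is exactly why $n^{-2/r}$ is the relevant scale. From this a first/second moment computation gives that $|\cB(G(n,p))|=(1+o(1))\binom{n}{r}p^{m}$ whp, that pairs of $K_{r}$'s meeting in three or more vertices are negligible, and that whp neither $\cB(G(n,p))$ nor $G^{*}(H)$ contains a $K_{r+1}$ or a ``spurious'' $K_{r}$ (one not already a hyperedge of $H$). It is here that $r\ge4$ is used, since for $r=3$ several of these expected counts are only $\Theta(1)$. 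So whp $G^{*}(H)$ is a disjoint union of $(1-\delta+o(1))\binom{n}{r}p^{m}$ cliques together with negligibly many small extra components. The quantity that is \emph{not} negligible, and that causes all the trouble, is the number of pairs of members of $\cB(G(n,p))$ meeting in exactly two vertices, hence sharing an edge: this exceeds the corresponding count for a binomial hypergraph of the same density by a factor $\asymp1/p=n^{2/r-\eps}$, so $\cB(G(n,p))$ is genuinely clumpier than the hypergraph we are trying to fit inside it, and no independent thinning can remove the excess.

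The crux is therefore to show that, despite this polynomial overcount of overlapping pairs, a binomial hypergraph of density $(1-o(1))p^{m}$ still embeds — equivalently, to bound $\Pr(S\text{ can be completed to a }K_{r}\mid\text{exposure history})$ from above by $p'$ and from below by $\pi$, uniformly over histories. The dangerous configuration is a near‑clique: once all but one edge of some $r$‑set $S$ have been forced into $G^{*}(H)$, conditionally $S\in H$ with probability bounded away from $0$, far above $p'$, so completing $S$ would overshoot. I would control this by (i) exposing the $r$‑sets in rounds, each round a near‑perfect packing of pairwise edge‑disjoint $r$‑sets, so as to limit how much of $\binom{S}{2}$ the history of $S$ has already seen; (ii) using the slack $\delta>0$ to absorb the $1+o(1)$ distortions that conditioning on $K_r$/non‑$K_r$ information about overlapping $r$‑sets produces — again because $p^{m-1}n^{r-2}\to0$ makes all such conditioning weak; and (iii) abandoning the coupling (coupling $G$ and $H$ independently) on the rare event that a near‑clique, or any of the dense local configurations excluded in the structural step, ever appears, then bounding the probability of this bad event by $o(1)$ with the same moment estimates. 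Making the uniform conditional‑probability bounds in (i)–(ii) go through — i.e.\ showing that the clumping, although only polynomially mild, is spread out enough to be absorbed — is the step I expect to be the real obstacle, and is presumably why the argument needs $\eps=\eps(r)$ small rather than working all the way up to the $K_r$‑factor threshold.
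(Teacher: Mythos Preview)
Your high-level framework---process the $r$-sets sequentially, and at each step calibrate an acceptance coin so that the hyperedge enters $H$ with conditional probability exactly $\pi$---is the same as the paper's. But there are two genuine gaps.

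First, you have the direction of the difficulty backwards. You identify the danger as the conditional probability being \emph{too large} (your ``near-clique'' overshoot) and propose exposing $r$-sets in rounds of edge-disjoint packings to prevent this. Overshooting is a non-problem: whenever $\pi_j>\pi$ one simply tosses an independent coin with probability $\pi/\pi_j$ and only tests $G$ if it comes up heads; this keeps the marginal exactly $\pi$ at no cost. The paper therefore processes the $r$-sets in an \emph{arbitrary} order---no packings. The only obstacle is when $\pi_j<\pi$, i.e.\ a \emph{lower} bound on the conditional probability, and your proposal contains no mechanism for this.

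Second, you are missing both technical ingredients that deliver that lower bound. The paper, at step $j$, records only the yes/no outcomes of previous clique tests (never individual edges), and uses a Janson-type inequality via Harris to obtain
\[
\pi_j \;\ge\; p^{\binom{r}{2}}\bigl(1-Q_j\bigr),\qquad
Q_j=\sum_{i\in N_1} p^{|E_i\setminus(E_j\cup R)|},
\]
where $R$ is the union of the cliques found so far and $N_1$ is the set of previously-tested-and-rejected $r$-sets sharing an unrevealed edge with $E_j$. A direct count, using only that $R\cup E_j$ has maximum degree $n^{o(1)}$, shows that every term with $E_i\not\subset E_j\cup R$ contributes $o(1)$. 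The remaining terms are those with $E_i\subset E_j\cup R$: a rejected $r$-set all of whose edges are already covered by found cliques together with $E_j$. This is where $r\ge4$ actually enters, and not where you placed it. The paper proves a deterministic lemma: if an $r$-uniform hypergraph is blown up to a graph by replacing each hyperedge with a $K_r$, and an ``extra'' $K_r$ appears on a non-hyperedge, then for $r\ge4$ the hypergraph must contain a connected complex subhypergraph on at most $\binom{r}{2}$ hyperedges. Such configurations whp do not occur in $H_r(n,\pi)$, so on this good event the troublesome terms never arise, $Q_j=o(1)$, and $\pi_j\ge\pi$. For $r=3$ the lemma fails---the unique exception is three triangles pairwise sharing distinct single vertices (a ``clean $3$-cycle'')---and this, not a vague ``several expected counts are $\Theta(1)$'', is the precise reason the proof does not extend to $r=3$.

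Your structural remarks about overlaps in $\ge3$ vertices and spurious $K_r$'s are in the right spirit, but they do not by themselves control $Q_j$; and your proposed Strassen/stochastic-domination detour, which you abandon in favour of sequential exposure anyway, would in any case require exactly the same lower bound.
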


Note that $\pi$ is (asymptotically) `what it should be', i.e., the probability that $r$ given vertices form a clique
in $G$. Thus almost all $K_r$s in $G$ will correspond to hyperedges in $H$, and the result says, roughly speaking,
that the $K_r$s in $G$ are distributed randomly. The precise statement involves a one-way bound:
we cannot expect to find a corresponding hyperedge of $H$ for every $K_r$ in $G$, since in $G$ we expect
to find on the order of $n^{2r-2}p^{2\binom{r}{2}-1}$ pairs of $K_r$s sharing two vertices which, when $p\to 0$,
is much larger than the expected number of pairs of hyperedges of $H$ sharing two vertices.

Theorem~\ref{th1} reduces certain questions
about the set of cliques in $G(n,p)$, whose distribution is very complicated due to the dependence between overlapping
cliques, to corresponding questions about $H_r(n,\pi)$, a much simpler random object.
This applies in particular to the $K_r$-factor question above,
relating it (one-way, but the other bound is easy)
to the threshold for a matching in $H_r(n,\pi)$ (Shamir's problem).
Indeed, the arguments of Johansson, Kahn and Vu~\cite{JKV} simplify considerably
when considering Shamir's problem (see the presentation in Chapter 13 of~\cite{FK}, for example).
This simpler version of their argument plus
Theorem~\ref{th1} gives an alternative proof of their $K_r$-factor result.

More significantly, Jeff Kahn~\cite{Kahn1} recently proved the following asymptotically sharp result for Shamir's problem.
\begin{theorem}[\cite{Kahn1}, Theorem 1.4]\label{Kthm}
Fix $r\ge 3$ and let 
\[
 \pi_0(n) = \binom{n-1}{r-1}^{-1}\log n \sim  (r-1)! n^{-r+1}\log n.
\]
For $\eps>0$ constant, whp $H_r(n,(1+\eps)\pi_0)$ contains a complete matching.\qed
\end{theorem}
Combined, this and Theorem~\ref{th1} have the following immediate corollary.
\begin{corollary}
Fix $r\ge 4$. Let $p_0(n)=\pi_0(n)^{1/\binom{r}{2}}$, where $\pi_0$ is as in Theorem~\ref{Kthm}. Then $p_0$ is a sharp threshold for $G(n,p)$ to contain a $K_r$-factor.
\end{corollary}
\begin{proof}
Fix $\eps>0$. For $p=(1-\eps)p_0$ it is well known and easy to check that whp there is at least one vertex of $G(n,p)$ not contained in a copy of $K_r$, so there is no $K_r$-factor. For $p=(1+\eps)p_0$ consider the coupling guaranteed (whp) by Theorem~\ref{th1}, noting that the $\pi$ we obtain satisfies $\pi\sim (1+\eps)^{\binom{r}{2}}\pi_0$. In particular (for large $n$) $\pi\ge (1+\eta)\pi_0$ for some constant $\eta>0$, so by Theorem~\ref{Kthm} whp $H_r(n,\pi)$ contains a complete matching. When the coupling succeeds (as it does whp), this implies the existence of a $K_r$-factor in $G(n,p)$.
\end{proof}

\begin{remark}
With an eye to even sharper results, one might wonder what the error term
in Theorem~\ref{th1} is; the proof below gives a bound $\pi=(1-n^{-\delta})p^{\binom{r}{2}}$
for some constant $\delta=\delta(r)>0$. This could presumably be improved, but it seems too much to hope that the recent hitting time result of Kahn~\cite{Kahn2} could be transferred from Shamir's problem to the $K_r$-factor problem using the methods of this paper.
\end{remark}

The omission of the case $r=3$ may appear strange. This case \emph{seems} much simpler, but, surprisingly, there
is an obstacle to the proof in this particular case. Annika Heckel~\cite{Heckel} managed to overcome this, proving an analogue of Theorem~\ref{th1} for $r=3$. Despite this, we will state and prove a weaker form of this result in Section~\ref{sec2}, since the proof illustrates in a simple context
a `thinning' technique used in Section~\ref{sec_bal}, which may perhaps be useful elsewhere.

\subsection{Extensions}

Although our main focus is the graph case, we also prove corresponding results for hypergraphs. For $r>t\ge 2$, let $K_r^{(t)}$ denote the complete $t$-uniform hypergraph on $r$ vertices.

\begin{theorem}\label{thhyp}
Let $r>t$ be given with $t\ge 2$ and $r\ge 4$. There exists some $\eps=\eps(r,t)>0$ such that, for any $p=p(n)\le n^{-(r-1)/\binom{r}{t}+\eps}$,
the following holds. For some $\pi=\pi(n)\sim p^{\binom{r}{t}}$,
we may couple the random hypergraph $G=H_t(n,p)$ with the random hypergraph $H=H_r(n,\pi)$ so that,
whp, for every hyperedge in $H$ there is a copy of $K_r^{(t)}$ in $G$ with the same vertex set.
\end{theorem}
Of course, the $t=2$ case of Theorem~\ref{thhyp} is simply Theorem~\ref{th1}. We have stated the graph case separately as it seems most interesting, and (to the author) less confusing.

Once again, combined with Kahn's Theorem~\ref{Kthm}, this has the following corollary, giving the `correct' asymptotic threshold for a $K_r^{(t)}$-factor in $H_t(n,p)$.
\begin{corollary}
Fix $r>t$ with $t\ge 2$ and $r\ge 4$, and define $\pi_0(n)$ as in Theorem~\ref{Kthm}. Then $p(n)=\pi_0(n)^{1/\binom{r}{t}}$ is a sharp threshold for $H_t(n,p)$ to contain a $K_r^{(t)}$-factor.\qed
\end{corollary}

We also prove an extension to certain non-complete graphs or hypergraphs.

\begin{definition}\label{dd1}
If $F$ is a (hyper)graph with at least two vertices, let
\[
 d_1(F) = e(F)/(|F|-1)
\]
be the \emph{$1$-density} of $F$. We say that $F$ is \emph{$1$-balanced} if
$d_1(F')\le d_1(F)$ for all sub(hyper)graphs $F'\subseteq F$ with at least two vertices, and \emph{strictly $1$-balanced}
if this inequality is strict for all such $F'\subsetneq F$.
\end{definition}
$1$-balanced is the natural notion
of balanced when studying $F$-factors, since the expected number of copies of $F$ in $G(n,p)$ (or $H_t(n,p)$)
containing a given vertex is of order $n^{|F|-1}p^{e(F)}$. The term \emph{balanced} is used in~\cite{JKV}, but 
we avoid this since it means too many different things in different contexts.

Theorems~\ref{th1} and~\ref{thhyp} can be generalized, at least to some extent, to certain $1$-balanced
(hyper)graphs $F$.
Since the statements are a little technical, we postpone them to Section~\ref{sec_bal}, stating
here two consequences concerning $F$-factors, one weak but relatively general, and one strong but with extra conditions on $F$.

\begin{theorem}\label{thbal}
Let $F$ be a $1$-balanced $t$-uniform hypergraph, where $t\ge 2$. There is some constant
$a=a_F$ such that if $p=p(n)\ge (\log n)^a n^{-1/d_1(F)}$, and $|F|$ divides $n$, then whp $H_t(n,p)$
contains an $F$-factor.
\end{theorem}
Note that this result is tight up to the log factor. When $F$ is strictly $1$-balanced, then 
Johansson, Kahn and Vu~\cite{JKV} gave a sharper result (finding the threshold up to a constant
factor), but for other graphs they gave a result with an $n^{o(1)}$ error term although, as pointed
out by a referee, with some care their method would also give a power of log as the error term.
Gerke and McDowell~\cite{GMcD} gave a sharp (up to constants) result for a certain class of unbalanced
graphs (which they call `nonvertex-balanced').
Theorem~\ref{thbal} extends to the multipartite multigraph setting of~\cite{GMcD}.

Finally, we turn to asymptotically sharp results. Here we need some further definitions. We say that a hypergraph $F$ is \emph{$k$-connected} if it has at least $k+1$ vertices and has no cutset $S$ of size at most $k-1$, where $S\subset V(F)$ is a \emph{cutset} if we may write $F=F_1\cup F_2$ where $V(F_1)\cap V(F_2)=S$ and neither $V(F_1)$ nor $V(F_2)$ is contained in $S$. For graphs, this is exactly the usual notion of $k$-connectivity.

\begin{definition}\label{defnice}
A $t$-uniform hypergraph $F$ is \emph{nice} if (i) $F$ is strictly 1-balanced, (ii) $F$ is $3$-connected,
and (iii) either $t\ge 3$, or $t=2$ and $F$ cannot be transformed into an isomorphic graph by adding one edge and deleting one edge.
\end{definition}
Note that the restriction (iii) is only needed in the graph case, and is satisfied by any regular graph. An example of a graph $F$ satisfying (i) and (ii) but not (iii) is $K_5$ with an edge deleted. Nice hypergraphs are the class for which the transfer argument in this paper gives a sharp result for the $F$-factor threshold.

\begin{theorem}\label{thnice}
Let $F$ be a fixed nice $t$-uniform hypergraph with $r$ vertices and $s$ edges. Then
\[
 p_0(n)=\bigl((\aut(F)/r) n^{-r+1}\log n\bigr)^{1/s}
\]
is a sharp threshold for $H_t(n,p)$ to contain an $F$-factor.
\end{theorem}

The rest of the paper is organized as follows. In Section~\ref{sec_prelim} we give some further
definitions and some preparatory lemmas. Theorems~\ref{th1} and~\ref{thhyp} are proved in Section~\ref{sec1}. In Section~\ref{sec2} we illustrate a `thinning technique', proving a weaker form of Heckel's $r=3$ result. Generalizations of (in some cases a weaker form of) Theorem~\ref{thhyp}
to certain (hyper)graphs other than $K_r$ are stated and proved in Section~\ref{sec_bal},
and Theorems~\ref{thbal} and~\ref{thnice} are proved there. Finally,
we finish with a brief discussion of open questions in Section~\ref{sec_q}.

\section{Preliminaries}\label{sec_prelim}

Fix $r>t\ge 2$. In this and the next section, we work simultaneously with $r$-uniform hypergraphs, and with graphs (for Theorem~\ref{th1}) or $t$-uniform hypergraphs (for Theorem~\ref{thhyp}). We will refer to the latter as `graphs' (when $t=2$) or as `$t$-graphs'; we use the term `hypergraph' to mean an $r$-uniform hypergraph. On a first reading, the reader may wish to focus on the the case $t=2$, so $t$-graphs become simply graphs.

Given a hypergraph $H$, we write
$|H|$, $e(H)$ and $c(H)$ for the number of vertices, hyperedges\footnote{Since we consider ($t$-)graphs and hypergraphs simultaneously, we will try to distinguish ($t$-)graph edges from hyperedges}, and components of $H$, and
\[
 n(H) = (r-1) e(H) + c(H) - |H|
\]
for the \emph{nullity} of $H$, which is simply the usual (graph) nullity of any multigraph obtained
from $H$ by replacing each hyperedge by a tree with the same vertex set. We will need
this definition only in the connected case. Note that $n(H)\ge 0$, and (for connected $H$),
$n(H)=0$ if and only if $H$ is a \emph{tree}, i.e., can be built by starting with a single vertex,
and at each step adding a new hyperedge meeting the existing vertex set in exactly one vertex.

A connected hypergraph $H$ is \emph{unicyclic} if $n(H)=1$ and \emph{complex} if $n(H)\ge 2$. Thus, 
for example, any connected hypergraph containing two hyperedges that share three or more vertices is complex.

\begin{definition}
By an \emph{avoidable configuration} we mean 
a connected, complex hypergraph with at most $2^{r+1}$ hyperedges.\footnote{The constant $2^{r+1}$ here is somewhat arbitrary, chosen large enough to (easily) cover all applications of this concept through the paper.}
\end{definition}

The motivation for this definition  is the fact (proved in a moment) that
such configurations will (whp) not appear in random hypergraphs of the density we consider.
Indeed, roughly speaking, these random hypergraphs are locally tree-like around most vertices,
with some unicyclic exceptions.
Globally, they can be far from unicyclic. We record this simple observation as a lemma for ease
of reference, and give the trivial proof for completeness.

\begin{lemma}\label{l1}
For each fixed $r\ge 2$ there is an $\eps>0$ with the following property. 
If $H=H_r(n,\pi)$ with $\pi=\pi(n)\le n^{-(r-1)+\eps}$, then whp $H$
contains no avoidable configurations.
\end{lemma}
\begin{proof}
Fix $r\ge 2$. Any avoidable configuration is a connected hypergraph of bounded size, so up to isomorphism
there are $O(1)$ of them. Let $C$ be any avoidable configuration.
Then the expected number of copies of $C$ in $H$
is $\Theta(n^{|C|}\pi^{e(C)}) \le n^{|C|-(r-1)e(C)+O(\eps)}$.
But $C$ is complex and connected, so $(r-1)e(C)-|C|=n(C)-1\ge 1$, so this expectation is at most
$n^{-1+O(\eps)}\le n^{-0.99}=o(1)$ if $\eps$ is sufficiently small.
\end{proof}

The next (deterministic) lemma shows that if, when we replace each hyperedge of a hypergraph $H$ by a copy of $K_r^{(t)}$,
there is an `extra' copy of $K_r^{(t)}$ (one that does not correspond to a hyperedge in $H$),
then $H$ must contain an avoidable configuration. It is here that the cases $t=2$ and $t\ge 3$ differ most.

\begin{lemma}\label{l2}
Suppose that $r>t\ge 2$ and that $r \ge 4$. Let $H$ be an $r$-uniform hypergraph, and let $G$ be the $t$-graph obtained
from $H$ by replacing each
hyperedge by a copy of $K_r^{(t)}$ (merging any multiple edges). If $G$ contains a copy $F$ of $K_r^{(t)}$ on
a set of $r$ vertices which is not a hyperedge in $H$, then $H$ contains an avoidable configuration.
\end{lemma}
\begin{proof}
By assumption there are hyperedges $h_1,\ldots,h_k$ of $H$ such that the union of the
corresponding copies $F_1,\ldots,F_k$ of $K_r^{(t)}$ includes $F$, the complete $t$-graph on a set $h\notin E(H)$
of $r$ vertices. Clearly, we may assume that each $F_i$ shares at least $t$ vertices with $F$
and (removing `redundant' $F_i$ that contribute no `new' ($t$-)edges to $F$ not covered by earlier $F_j$) that $k\le \binom{r}{t}$.
Let $C$ be the hypergraph with hyperedges $h_1,\ldots,h_k$, with vertex set $\bigcup_{i=1}^k h_i$.
Let $C^+=C+h$ be the hypergraph formed from $C$ by adding $h$ as a hyperedge. (Its vertices are
all included already.)

Certainly, $C$ is connected: otherwise, its components would partition $V(h)=V(F)$, and those ($t$-)edges of $F$ not
contained within a part of this partition would not be covered by $\bigcup F_i$. Also, $e(C)=k\le\binom{r}{t}\le 2^r$. So it remains
only to show that $n(C)\ge 2$; then $C\subset H$ is the required avoidable configuration.

Let $s_i=|F_i\cap F|=|h_i\cap h|$ be the number of vertices shared by $h_i$ and $h$.
Then, considering adding the hyperedges in the order $h,h_1,\ldots,h_k$, we have
\[
 n(C^+)\ge \sum_{i} (s_i-1).
\]
On the other hand, considering the ($t$-)edges of $F$ covered by each $F_i$,
\begin{multline}
 \binom{r}{t} \le \sum_i \binom{s_i}{t} \le \max\left\{\frac{\binom{s_i}{t}}{s_i-1}\right\} \sum_i(s_i-1)  
\\
 \le \max\left\{\frac{\binom{s_i}{t}}{s_i-1}\right\} n(C^+)
 \le \frac{\binom{r-1}{t}}{r-2} n(C^+), \label{bd}
\end{multline}
since none of the $F_i$ is equal to $F$ (so $s_i\le r-1$), and $\binom{x}{t}/(x-1)=x(x-2)(x-3)\cdots(x-t+1)/t!$ is strictly increasing in $x\ge t$.
It follows that
\[
 n(C^+)\ge (r-2)\frac{\binom{r}{t}}{\binom{r-1}{t}} = \frac{r(r-2)}{r-t} \ge r,
\]
with equality only if equality holds throughout \eqref{bd} and $t=2$. But then (in the equality case) all $s_i$
must be equal to $r-1$, so
any two $F_i$ overlap within $F$ in at least $r-2\ge 2=t$ vertices, so the first inequality in \eqref{bd} is strict.
Hence $n(C^+)\ge r+1$, so $n(C)=n(C^+)-(r-1)\ge 2$, as required.
\end{proof}

\begin{remark}\label{RA3}
The conclusion of Lemma~\ref{l2} does not hold when $r=3$ and $t=2$. Following through the proof, the condition $r\ge 4$
was only used in the second-last sentence. Thus we see that for $r=3$, $t=2$ there is a single exceptional
configuration: three triples with each pair meeting in a (distinct) vertex; we later refer
to this as a `clean $3$-cycle'; see the first diagram in Figure~\ref{fig_cycles}.
\end{remark}

\begin{figure}[htb]
 \begin{center}
\includegraphics[width=4.5in]{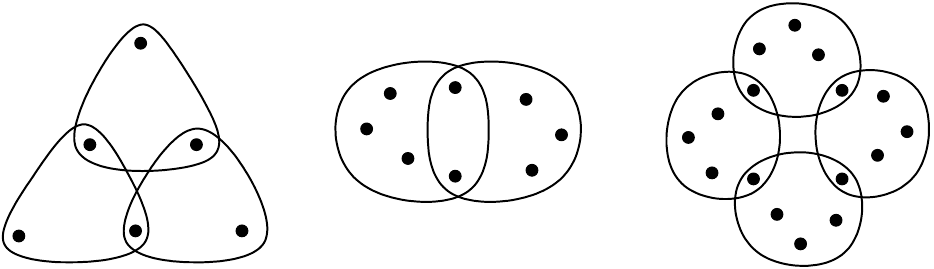}
 \end{center}
 \caption{Clean $k$-cycles in $r$-uniform hypergraphs for $(k,r)=(3,3)$, $(2,5)$ and $(4,5)$. At least when $k\ge 3$, such cycles are also known as `loose $k(r-1)$-cycles'.}\label{fig_cycles}
\end{figure}

\section{Proof of Theorems~\ref{th1} and~\ref{thhyp}}\label{sec1}

In this section we prove Theorem~\ref{thhyp} and thus Theorem~\ref{th1}, which is the special case $t=2$.
We attempt to optimize the terminology for the case $t=2$, speaking of ($t$-)edges or just edges in
the $t$-uniform hypergraph $G$, and hyperedges for the $r$-uniform $H$.

The overall strategy is similar to one employed by Bollob\'as and the author in~\cite{rmax}.
Only at one point will we need to assume $r\ge 4$, so most of the time we assume only that $r>t\ge 2$.
In essence, the idea is to
test for the presence of each possible $K_r^{(t)}$ in $G=H_t(n,p)$ (thus $G=G(n,p)$ when $t=2$) one-by-one, each time only observing whether
the $K_r^{(t)}$ is present or not, not which edges are missing in the latter case. It suffices to show that,
at least on a global event of high probability (meaning, as usual, probability $1-o(1)$ as $n\to\infty$),
the conditional probability that a certain test succeeds
given the history is at least~$\pi$.

There will be some complications. A minor one is that we would like to keep control
of the copies of $K_r^{(t)}$ `found so far' by using $H=H_r(n,\pi)$ rather than $G$, since we don't want to find too 
many copies. The solution to this is simple: if the conditional probability of a certain test succeeding
given the history is $\pi'>\pi$, then we toss a coin independent of $G$ (and of all other coins),
only actually testing $G$ for the copy of $K_r^{(t)}$ with (conditional) probability $\pi/\pi'$.

Another complication is that it will happen with significant probability that some tests
that we would like to carry out have conditional probability less than $\pi$ of succeeding. Roughly speaking,
as long as this happens $o(1/\pi)$ times, we are ok. More precisely, each time this happens we toss a $\pi$-biased coin
to determine whether the relevant hyperedge is present in $H$, and if so, our coupling fails.
We will show that the coupling succeeds on a global event of high probability.

Turning to the details, fix $r>t\ge 2$. Let $M=\binom{n}{r}$,
and let $E_1,\ldots,E_M$ denote the ($t$-)edge-sets of all possible copies
of $K_r^{(t)}$ in $G=H_t(n,p)$. Let $A_i$ be the event that $E_i\subset E(G)$, i.e., that the $i$th copy
is present. As outlined above, our algorithm proceeds as follows, revealing some information
about $G=H_t(n,p)$ while simultaneously constructing $H=H_r(n,\pi)$.

\begin{algorithm}\label{alg}
For each $j$ from $1$ to $M$:

First calculate $\pi_j$, the conditional probability of the event $A_j$ given all information revealed so far.

If $\pi_j\ge \pi$, then toss a coin with heads probability $\pi/\pi_j$. If it lands heads, test whether the event $A_j$
holds. If so, declare the hyperedge $h_j$ corresponding to $E_j$ to be present in $H$. If not, or if the coin lands tails, 
declare $h_j$ to be absent.

If $\pi_j<\pi$, then toss a coin with heads probability $\pi$, and simply declare $h_j$ to be present in $H$ if this coin lands heads. If this happens, our coupling has failed.
\end{algorithm}

At the end, the hypergraph $H$ we have constructed clearly has the correct distribution for $H_r(n,\pi)$,
so it remains only to show that the probability that the coupling fails is $o(1)$.

Suppose that we have reached step $j$ of the algorithm; our aim is to bound $\pi_j$. In the previous
steps, we have `tested' whether certain (not necessarily all) of the events $A_1,\ldots,A_{j-1}$
hold, in each case receiving the answer `yes' or `no'. Suppressing the dependence on $j$ in the notation,
let $Y$ and $N$ denote the corresponding
(random) subsets of $[j-1]$. Then, from the form of the algorithm, the information
about $G$ revealed so far is precisely that every event $A_i$, $i\in Y$, holds, and none of the events $A_i$, $i\in N$,
holds.

\begin{figure}[htb]
 \begin{center}
\includegraphics[width=4.2in]{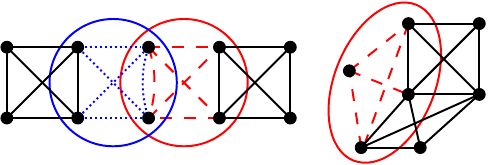}
 \end{center}
 \caption{A possible state of the algorithm. The black $K_4$s (not circled to avoid clutter) have been found to be present; $R$ consists of the black edges. Each red circled $K_4$ has been found to be absent, meaning within the circle, at least one of the red dashed edges is absent; these sets of edges form the $E_i'$. We are about to test for the blue circled $K_4$, i.e., the set $E_j'$ of blue dotted edges. With the black edges fixed as present, this is testing for an up-set conditional on a number of down-sets. In the key estimate (bounding $Q_j$ defined in \eqref{Qdef}) $j$ (the index of the blue $K_4$) is fixed, and we sum over $i$ such that $E_i'$ shares at least one edge with $E_j'$. We group the terms according to the pattern formed by $R\cup E_j'$ within the \emph{red} $K_4$ corresponding to $E_i$.}\label{fig_alg}
\end{figure}

Let $R=\bigcup_{i\in Y} E_i$ be the set of ($t$-)edges `revealed' so far. For $i\le j$ let $E_i'=E_i\setminus R$.
Then what we know about $G=H_t(n,p)$ is precisely that all edges in $R$ are present, and none of the
\emph{sets} $E_i'$, $i\in N$, of edges is present; see Figure~\ref{fig_alg}.
Working in the random ($t$-)graph $G'$ in which
each ($t$-)edge outside $R$ is present independently with probability $p$, and
writing $A_i'$ for the event $E_i'\subset E(G')$, we have
\[
 \pi_j = \Pr\left( E_j' \subset E(G') \Bigm| \bigcap_{i\in N} \left\{E_i'\not\subset E(G') \right\} \right)
 = \Pr\left(A_j' \Bigm| \bigcap_{i\in N} (A_i')^\cc \right).
\]
To estimate this probability we follow a standard strategy from the proof of Janson's inequality,
using a variation suggested by Lutz Warnke (see~\cite{RW-Janson}).
As usual, the starting point
is to consider which events $A_i'$ are independent of $A_j'$.
In particular, let
\[
 D_0 = \bigcap_{i\in N\::\: E_i'\cap E_j'=\emptyset} (A_i')^\cc
\quad\text{and}\quad
 D_1 = \bigcap_{i\in N\::\: E_i'\cap E_j'\ne\emptyset} (A_i')^\cc.
\]
Then
\begin{multline*}
 \pi_j = \Pr(A_j'\mid D_0\cap D_1) = \frac{\Pr(A_j'\cap D_0\cap D_1)}{\Pr(D_0\cap D_1)}
 \ge \frac{\Pr(A_j'\cap D_0\cap D_1)}{\Pr(D_0)} \\
 = \Pr(A_j'\cap D_1 \mid D_0) = \Pr(A_j'\mid D_0)-\Pr(A_j'\cap D_1^\cc \mid D_0).
\end{multline*}
Now $D_0$ only involves the presence or absence (in $G'$) of edges not in $E_j'$,
so $A_j'$ and $D_0$ are independent. Also, $A_j'\cap D_1^\cc$ is an up-set, while $D_0$
is a down-set. Hence, by Harris's inequality, $\Pr(A_j'\cap D_1^\cc \mid D_0) \le \Pr(A_j'\cap D_1^\cc)$.
Thus,
\[
 \pi_j \ge \Pr(A_j') - \Pr(A_j'\cap D_1^\cc).
\]
Let
\[
 N_1 =N_{j,1} = \{i\in N: E_i' \cap E_j' \ne\emptyset \},
\]
so $D_1=\bigcap_{i\in N_1} (A_i')^\cc$ and hence $D_1^\cc=\bigcup_{i\in N_1} A_i'$.
Then, using the union bound, we have
\begin{equation}\label{prekey}
 \pi_j \ge \Pr(A_j') - \sum_{i\in N_1}\Pr(A_j'\cap A_i').
\end{equation}
Hence
\begin{equation}\label{key}
 \pi_j \ge p^{|E_j\setminus R|} - \sum_{i\in N_1} p^{|(E_j\cup E_i)\setminus R|}
 = p^{|E_j\setminus R|}(1-Q) \ge p^{e(K_r^{(t)})} (1-Q),
\end{equation}
where
\begin{equation}\label{Qdef}
 Q = Q_j = \sum_{i\in N_1} p^{|E_i\setminus (E_j\cup R)|},
\end{equation}
which is of course random (depending, via $N_1$ and $R$, on the information revealed so far).
To prove Theorem~\ref{thhyp} it suffices, roughly speaking, to show that almost always $Q=o(1)$.

\begin{proof}[Proof of Theorem~\ref{thhyp}]
For the moment, we consider any fixed $r>t\ge 2$.
We take $p\le n^{-(r-1)/\binom{r}{t}+o(1)}$ for notational simplicity; it should be clear from the proof that
follows that the arguments carry through when $p \le n^{-(r-1)/\binom{r}{t}+\eps}$ as long as $\eps$ is sufficiently
small, meaning at most a certain positive constant depending on $r$ and $t$.

For this $p$ we have $\pi\le n^{-(r-1)+o(1)}$. Hence the expectation of the degree of a vertex of $H_r(n,\pi)$
is $\binom{n-1}{r-1}\pi\le n^{o(1)}$. Since the actual degree is binomial,
it follows by a Chernoff bound that there is some $\Delta=n^{o(1)}$ such
that whp every vertex of $H_r(n,\pi)$ has degree at most $\Delta/2^r$, say.
Let $\cB_1$ be the `bad' event that some vertex of $H$, the final version of the hypergraph constructed as we
run our algorithm, has degree more than $\Delta/2^r$, so $\Pr(\cB_1)=o(1)$.

Let $\cB_2$ be the `bad' event that $H$ contains an avoidable configuration, as defined
in Section~\ref{sec_prelim}. By Lemma~\ref{l1} we have $\Pr(\cB_2)=o(1)$.

Consider some $1\le j\le \binom{n}{r}$, which will remain fixed through the rest of the
argument. As outlined above, we condition on the result of steps $1,\ldots,j-1$ of our exploration;
we will show that if $\pi_j<\pi$ \emph{and} the
hyperedge corresponding to $E_j$ is present in $H$ (the only case where
the coupling fails), then $\cB_1\cup \cB_2$ holds.
The graph $R=R_j$ of `found' edges is a subgraph of the ($t$-)graph
formed by replacing each hyperedge of $H$ by a $K_r^{(t)}$. Hence $\Delta(R)\le \binom{r-1}{t-1}\Delta(H)$,
and we may assume that 
\begin{equation}\label{ass1}
 \Delta(R\cup E_j)\le \Delta = n^{o(1)},
\end{equation}
since otherwise $\cB_1$ holds.

Let us consider a particular $i\in N_1=N_{j,1}$, and its contribution to $Q=Q_j$.
Let $S$ be the ($t$-)graph with vertex set $V(E_i)$ in which we include a ($t$-)edge if it is
in $R\cup E_j$. Then the contribution is exactly $p^{e_i}$, where $e_i=|E_i\setminus E(S)|$. For example, in
the situation illustrated in Figure~\ref{fig_alg}, with $i$ corresponding to the red $K_4$ on the left,
$S$ consists of two edges, one from $R$ and one (the curved blue dotted edge) from $E_j'=E_j\setminus R$; thus $e_i=4$.

Crudely, $e_i$ is at least the number of edges of $E_i$ (the complete $t$-graph on $V(S)$)
not contained within the vertex set of any component of $S$. Suppose first that $S$ has at least two components,
and let their orders be $r_1,\ldots,r_{k+1}$, $k\ge 1$; this numbering will be convenient
in a moment. Note that $\sum r_\ell=r$. Note also that (by definition of $N_1$),
$E_j$ and $E_i$ intersect in at least one edge. Thus $S$ has at least one ($t$-)edge
and so at most $r-t+1$ components, i.e., $k\le r-t$.

Given the constraint $\sum r_\ell=r$, with each $r_\ell\ge 1$, by convexity the sum of $\binom{r_\ell}{t}$
is maximized when $r_1=\cdots=r_k=1$ and $r_{k+1}=r-k$.
Thus
\[
 e_i \ge \binom{r}{t} - \sum_{\ell=1}^{k+1}\binom{r_\ell}{t} \ge \binom{r}{t} - \binom{r-k}{t}.
\]
We next consider how many $i$ may lead to a configuration of this type, specifically, 
one where $S$ has $k+1$ components. Note that $S$ is formed of edges in $R\cup E_j$, a ($t$-)graph
of maximum degree at most $\Delta=n^{o(1)}$, and includes at least one vertex of a given set $V(E_j)$ of size $r=O(1)$.
It follows that there are at most
\begin{equation}\label{choices}
 rn^k((t-1)\Delta)^{r-k-1}=n^{k+o(1)}
\end{equation}
such choices: $r$ choices for an initial
vertex in $V(E_j$), then at most $n$ choices each time we start
a new component other than the first, and (crudely) at most $(t-1)\Delta$ choices for each subsequent vertex within a component.
Hence the contribution of such terms ($S$ having $k+1\ge 2$ components) to $Q$ is at most
\[
 \sum_{k=1}^{r-t} n^{k+o(1)}p^{\binom{r}{t}-\binom{r-k}{t}}.
\]
A fairly standard calculation shows that this is $o(1)$ (in fact, bounded by a small negative
power of $n$); indeed, the power of $n$
in a given term of the sum is at most $o(1)$ plus
\[
 k - \left(\binom{r}{t}-\binom{r-k}{t}\right) \frac{r-1}{\binom{r}{t}} = k - (r-1) + (r-1)\binom{r-k}{t}\binom{r}{t}^{-1},
\]
and this function of $k$ is strictly convex on $[0,r-t]$, zero for $k=0$ and negative for $k=r-t$; it follows that it is negative for $1\le k\le r-t$.

It remains only to treat the case where $S$ is connected. There are at most $r((t-1)\Delta)^{r-1}=n^{o(1)}$ terms of this
form (by the argument for \eqref{choices} with $k=0$), so the contribution from those with $e_i>0$ is at most $n^{o(1)}p
\le n^{-(r-1)/\binom{r}{t}+o(1)}=o(1)$.
This leaves the case where $e_i=0$, i.e., where $E_i\subset R\cup E_j$. Such $E_i$ contribute
exactly $1$, so we have shown that if $\cB_1$ does not hold, then
\begin{equation}\label{Qest}
 Q_j = o(1) + |D_j|,
\end{equation}
where
\[
 D_j = \{i\in N_{j,1}: E_i\subset R\cup E_j\}.
\]
In particular, when $\cB_1$ does not hold and $D_j=\emptyset$, we have $Q_j=o(1)$. Thus, recalling \eqref{key}, we may choose $\pi\sim p^{\binom{r}{t}}$ so that in such cases $\pi_j\ge \pi$, and our coupling cannot fail at such a step.

Let us call step $j$ \emph{dangerous} if $D_j\ne\emptyset$.
Note that in any such step we have $\pi_j=0$,
since if we do have $E_j\subset E(G)$, 
then $E_j\cup R\subset E(G)$ but (since $i\in N$) we have $E_i\not\subset E(G)$, giving a contradiction.
In a dangerous step, we toss a new $\pi$-probability coin to determine
whether the hyperedge $h_j$ corresponding to $E_j$ is present in $H$. If it is, we call step $j$ \emph{deadly}.
Our coupling fails if and only if there is some deadly step $j$.
To complete the proof it thus suffices to show that if any step is deadly, then $\cB_2$ holds.
If step $j$ is deadly, then every ($t$-)edge in $E_j\cup R\supset E_i$
lies within some hyperedge of $H$, but the hyperedge corresponding to $E_i$ is not present in $H$
(since $i\in N$).
In this case, using (only now) the condition $r\ge 4$, by Lemma~\ref{l2} $H$ contains an avoidable configuration,
i.e., $\cB_2$ holds. Thus, if our coupling fails, $\cB_1\cup \cB_2$ holds, an event of probability $o(1)$. This completes the proof of Theorem~\ref{thhyp}.
\end{proof}

As noted earlier, Theorem~\ref{th1} is a special case of Theorem~\ref{thhyp}.

\section{The triangle case}\label{sec2}

In this section we prove the following result, a weakening of the (missing) $r=3$ case of Theorem~\ref{th1}. The result itself is of no relevance, since it is superseded by Annika Heckel's stronger result~\cite{Heckel}, but the proof method may perhaps be. In particular, we will use the same idea in a more complicated context in Section~\ref{sec_bal}, and it seems easier to introduce it in the present simple case.

\begin{theorem}\label{th2}
There exists a constant $\eps>0$ such that, for any $p=p(n)\le n^{-2/3+\eps}$,
the following holds. Let $a<1/4$ be constant, and let $\pi=\pi(n) = a p^3$.
Then we may couple the random graph $G=G(n,p)$ with the random hypergraph $H=H_3(n,\pi)$ so that,
whp, for every hyperedge in $H$ there is a copy of $K_3$ in $G$ with the same vertex set.
\end{theorem}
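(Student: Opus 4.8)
The plan is to build a coupling of $G=G(n,p)$ and $H=H_3(n,\pi)$, $\pi=ap^3$, in which whp every hyperedge of $H$ is the vertex set of a triangle of $G$. Writing $T=T(G)$ for the $3$-uniform hypergraph on $[n]$ whose hyperedges are the vertex sets of the triangles of $G$, this amounts to finding a copy of $H_3(n,\pi)$ inside $T$ in the one-sided sense of the theorem. The argument rests on the very simple structure of $T$ in this range. For $p\le n^{-2/3+\eps}$ with $\eps>0$ small, routine first- and second-moment calculations give, whp: $|T|=(1+o(1))\binom n3 p^3$; the number of pairs of triangles of $G$ sharing an edge is $O(n^{2/3+5\eps})=o(|T|)$, so that the auxiliary graph on the triangles of $G$ in which two are joined when they share an edge is sparse, with almost all of its vertices isolated; and (for $\eps$ small enough) no edge of $G$ lies in more than four triangles. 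Thus the triangles of $G$ that fail to be edge-disjoint from all other triangles fall into $o(n)$ clusters, each of bounded size. The binomial hypergraph $H_3(n,\pi)$ has the same qualitative shape, and $|H_3(n,\pi)|<|T|$ whp since $a<1$.

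First I would reveal $G$ in two stages: put each edge of $G$ independently into $G'$ with probability $\tfrac12$, so that $G'\sim G(n,p/2)$ and $G'\subseteq G$, and introduce independent uniform random variables $U_S\in[0,1]$ indexed by the triples $S$. For $S=\{x,y,z\}$ with $x<y<z$, call $S$ a \emph{candidate} if the two edges $xy,xz$ at its least vertex both lie in $G'$, and \emph{active} if moreover $yz\in G$ and $U_S\le 4a$ (which is meaningful since $4a<1$). Candidacy depends only on the $G'$-status of $xy$ and $xz$, hence is independent of the event $yz\in G$, and so $\Pr(S\text{ active})=\big(\tfrac p2\big)^2\,p\cdot 4a=ap^3=\pi$; also an active $S$ has all three of its edges in $G$, so $S\in T$. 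Let $A$ be the set of active triples. If the indicators $\mathbf 1[S\in A]$ were mutually independent then $A\sim H_3(n,\pi)$ and we would be done; they are not, but every dependence is short-range — it occurs only between triples sharing at least two vertices — and reserving a designated pair of edges per triple for the first stage, together with the slack $a<\tfrac14$, is precisely what leaves enough room to correct it.

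The correction would proceed cluster by cluster on the edge-sharing clusters of $A$, almost all of which are single triples. On each cluster one resamples the membership indicators of its triples, using the extra randomness and the $G'$-structure, so as to: (i) preserve each marginal at $\pi$; (ii) reproduce the conditional law of the indicators given which triples of the cluster are triangles; and (iii) reproduce the joint law of $H_3(n,\pi)$ on the cluster, including the correct (small) rate at which $H_3(n,\pi)$ genuinely places two hyperedges on a common edge. This is where the hypothesis $a<\tfrac14$ is used: on an edge of $G$ carrying $k\le 4$ triangles one must be able to realise the event ``at most one of these $k$ triples is kept, each with probability $\pi$'', and $4a<1$ is exactly what permits this while preserving all marginals; the few remaining coincidences demanded by $H_3(n,\pi)$ are then supplied with fresh coins, drawing on the ample reservoir of candidate triangles. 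One finally checks that the resulting $H$ has exactly the distribution $H_3(n,\pi)$ and that whp $H\subseteq A\subseteq T$, the failure probability being bounded by summing, over the $o(n)$ nontrivial clusters, the chance that the correction must reach outside $A$.

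The main obstacle is this last step: arranging the correction so that $H$ has \emph{exactly} the law $H_3(n,\pi)$, not merely a law close to it in total variation, while still lying inside $T$ with high probability. This is also where $r=3$ is genuinely harder than the case $r\ge 4$ of Theorem~\ref{th1}: for $r\ge 4$, two copies of $K_r$ meeting in two vertices already share $\binom{r-1}{2}\ge 3$ edges, so the interacting configurations are far sparser and each copy is pinned down by so few edges that the analogous construction can be run with $a\to 1$, whereas for $r=3$ a single edge of $G$ can lie in several triangles at once, and reconciling the resulting local constraints with the exact binomial marginal is what costs a constant factor and forces $a<\tfrac14$. The structural estimates needed to drive the correction — bounded triangle-multiplicity of edges, $o(n)$ interacting clusters — are routine given the moment computations; the delicate part is the bookkeeping that turns the cluster-wise resampling into an honest product measure.
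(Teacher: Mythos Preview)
Your plan is genuinely different from the paper's, and the step you yourself flag as ``the main obstacle'' --- turning the dependent family $A$ of active triples into an \emph{exact} sample from $H_3(n,\pi)$ that still lies inside $T$ --- is not bookkeeping; it is essentially the whole theorem, and you have not carried it out. The active-triple construction does give the correct marginals, but the resampling scheme is only described in outline: you do not specify how, on a nontrivial cluster, the joint law is adjusted to the product law, nor why the adjusted $H$ remains inside $T$. In particular, $H_3(n,\pi)$ occasionally places a hyperedge on a triple that is not a triangle of $G$ at all, and it occasionally places two hyperedges on triples sharing a pair even when $G$ has no such pair of triangles; your ``fresh coins, drawing on the ample reservoir of candidate triangles'' remark gestures at this but does not resolve it. Your account of where $1/4$ enters is also muddled: the $4a$ in the threshold $U_S\le 4a$ merely cancels the $(p/2)^2$ coming from the two $G'$-edges and has nothing to do with the (separate, and correct) fact that whp each edge of $G$ lies in at most four triangles. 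Finally, your claim that for $r\ge 4$ two copies of $K_r$ meeting in two vertices share $\binom{r-1}{2}$ edges is false --- they share one edge; the real distinction between $r=3$ and $r\ge 4$ is Lemma~\ref{l2} and Remark~\ref{RA3}, namely the clean $3$-cycle exception.

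The paper avoids any correction step. It runs Algorithm~\ref{alg} exactly as for $r\ge 4$, but attaches to each potential triangle $F_j$ an independent Bernoulli $I_j$ with $\Pr(I_j=1)=c$, and at step $j$ tests the joint event $\{E_j\subset E(G)\}\cap\{I_j=1\}$. The Janson-type lower bound \eqref{prekey} then carries one factor of $c$ in the main term and $c^2$ in each subtracted overlap term, giving $\pi_j\ge cp^3(1-cQ_j)$. The same analysis as before shows $Q_j\le 1+o(1)$ unless step $j$ is ``dangerous'' in a stronger sense (two distinct already-rejected triangles forced inside $R\cup E_j$), and a dangerous-and-deadly step is shown, via two overlapping clean $3$-cycles, to create an avoidable configuration in $H$. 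Hence off a bad event of probability $o(1)$ one has $\pi_j\ge(1+o(1))c(1-c)p^3$, and choosing $c$ with $c(1-c)>a$ --- possible precisely when $a<1/4$ --- gives $\pi_j\ge\pi$. The hypergraph $H$ produced has the law of $H_3(n,\pi)$ \emph{by construction}, so no after-the-fact repair is needed; this is the idea your plan is missing.
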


The proof of Theorem~\ref{th1} given in the previous section `almost' works for $r=3$.
The only problem is the unique exception to Lemma~\ref{l2}, a `clean' hypergraph 3-cycle;
an $r$-uniform hypergraph is a \emph{clean $k$-cycle} if it can be formed from a graph $k$-cycle
by adding $r-2$ new vertices to each edge, with the added vertices all distinct; see Figure~\ref{fig_cycles}. (We extend the definition to $k=2$, when it simply means two hyperedges sharing exactly $2$
vertices.)

\begin{remark}
Simply by `skipping over' dangerous steps, for $p\le n^{-2/3+\eps}$
the proof of Theorem~\ref{th1}
shows the existence of a coupling between $G=G(n,p)$
and $H=H_3(n,\pi)$, $\pi\sim p^3$, so that whp for every hyperedge of $H$ which is not in a clean 3-cycle
(i.e., almost all of them) there is a corresponding triangle in $G$.
\end{remark}
Alternatively, as claimed in Theorem~\ref{th2}, we can avoid leaving out any hyperedges of $H$, at the cost
of decreasing its density $\pi$ by a constant factor.

\begin{proof}[Proof of Theorem~\ref{th2}]
We are given a constant $a<1/4$. Fix another constant $0<c<1$ such that 
\begin{equation}\label{ca}
 c(1-c) > a.
\end{equation}
(Of course $c=1/2$ always works; in Section~\ref{sec_bal} other choices will be useful.)
In the previous section we examined the random graph $G=G(n,p)$ according to Algorithm~\ref{alg},
checking copies of $F=K_r$ for their presence one-by-one. Here, in addition to the 
random variables corresponding to the edges of $G(n,p)$, we consider one $0$/$1$-random variable
$I_j$ for each of the $\binom{n}{3}$ possible copies $F_j$ of $K_3$,
with $\Pr(I_j=1)=c$.
We take the $I_j$ and the indicators of the presence of the edges in $G(n,p)$ to be independent.
We think of the $I_j$ as `thinning' the copies of $K_3$ in $G(n,p)$, selecting a random subset.
Note that $I_j$ should not be confused with the random variable describing the presence of the
corresponding hyperedge in $H$.

With $\pi=a p^3$, our aim will be to construct a random hypergraph $H$ with the distribution
of $H_3(n,\pi)$ so that for every hyperedge in $H$ there is a triangle in $G$ with $I_j=1$.
In other words, we try to embed (in the coupling as a subhypergraph sense) $H$ within the `thinned
triangle hypergraph' $H_3^-(G)$ having a hyperedge for each triangle in $G$ with $I_j=1$.
This clearly suffices. But how does making things (apparently) harder for ourselves in this way help?

We follow the proof of Theorem~\ref{th1} very closely. Consider the random (non-uniform) hypergraph $G^*$,
with edge set $E(G(n,p))\cup \{F_j:I_j=1\}$, i.e., an edge for each edge of $G=G(n,p)$, and a triple
for each $j$ such that $I_j=1$.
We follow the same algorithm as before, \emph{mutatis mutandis}\footnote{`Changing what must be changed', i.e., with the obvious modifications to the new setting.}, now examining $G^*$ rather than $G$.
At each step we check whether a given triangle $F_j$ is present `after thinning', i.e., whether
it is the case that $E_j\subset E(G)$ and $I_j=1$, where $E_j$ is the edge-set of $F_j$.
In other words, we test whether  $E_j^*\subset E(G^*)$, where $E_j^*$
consists of the edges $E_j$ together with one hyperedge corresponding to $F_j$; an individual
event of this form has probability $cp^3$. 
As before, we only record the overall yes/no answer, and write $\pi_j$ for the conditional probability
of this test succeeding given the history.
Because the (hyper)edges of $G^*$ are present
independently, the argument leading to \eqref{prekey} carries through \emph{exactly} as before,
but now with $A_j$ the event that $E_j^*\subset E(G^*)$, and with $R$ the set of (hyper)edges of $G^*$ found
so far. Noting that each triangle has its own `extra' hyperedge,
in place of \eqref{key} we thus obtain
\begin{equation}\label{key2}
 \pi_j \ge cp^{|E_j\setminus R|} - \sum_{i\in N_{j,1}} c^2p^{|(E_j\cup E_i)\setminus R|}
 = cp^{|E_j\setminus R|}(1-cQ) \ge cp^3 (1-cQ),
\end{equation}
where, as before,
\[
 Q = Q_j = \sum_{i\in N_{j,1}} p^{|E_i\setminus (E_j\cup R)|}.
\]
The key point is that the first term in \eqref{key2} contains one factor of $c$ (from the probability 
that $I_j=1$), while the second contains two, from the probability that $I_i=I_j=1$.

We estimate $Q_j$ exactly as before, leading to the bound \eqref{Qest}, valid whenever
$\cB_1$ does not hold. This time, let us call step $j$ \emph{dangerous} if there are 
two (or more) distinct $i,i'\in N_{j,1}$ such that $E_i$ and $E_{i'}$ are both contained in $E_j\cup R$.
If step $j$ is not dangerous, then from \eqref{Qest} we have $Q_j\le 1+o(1)$,
which with \eqref{key2} gives
\[
 \pi_j \ge (1+o(1))c(1-c)p^3 \ge a p^3 = \pi,
\]
for $n$ large enough, where in the second step we used \eqref{ca}. Hence
our coupling cannot fail at such a step.

As before, we call a dangerous step $j$ \emph{deadly} if the hyperedge (now a triple) corresponding
to $E_j$ is present in the random hypergraph $H=H_3(n,\pi)$ that we construct. Our coupling
fails only if such a step exists. As before, this implies that the simple graph $G(H)$
corresponding to $H$ contains a triangle with edge-set $E_i$, even though $H$ contains
no triple corresponding to this triangle. We may assume that $H$ contains no avoidable configuration
(otherwise $\cB_2$ holds). By Remark~\ref{RA3}, it follows that $H$ contains a clean $3$-cycle $H_1$
`sitting on' $E_i$. Similarly, $H$ contains a clean $3$-cycle $H_2$ sitting on $E_{i'}$.
Since $i,i'\in N_{j,1}$ we have that $E_i$ and $E_i'$ both intersect $E_j$ in at least one edge. 
Hence there is a vertex common to $E_i$ and $E_i'$. It follows that $H_1$ and $H_2$ share at least
one vertex. Since they are unicyclic and not identical, it easily follows that their
union is connected and complex, and (since it has at most $6\le 2^r$ hyperedges)
is hence an avoidable configuration. So $\cB_2$ does
hold after all. Thus we have again shown that if our coupling fails, $\cB_1\cup \cB_2$ holds,
an event of probability $o(1)$.
\end{proof}

\section{Extension to $1$-balanced graphs}\label{sec_bal}

In this section we state and prove an extension to certain $1$-balanced ($t$-uniform hyper)graphs $F$, considering
copies of $F$ in $G(n,p)$ ($H_t(n,p)$) rather than copies of $K_r$. Our main focus is the graph case $t=2$, but it turns out that the proof can easily be written to extend to $t\ge 3$ with no changes. Still, we attempt to optimize the notation for $t=2$, writing `($t$-)graph' or sometimes just `graph' for a $t$-uniform hypergraph, as in Sections~\ref{sec_prelim} and~\ref{sec1}.

We shall write $r$ for $|F|$ and $s$ for
$e(F)$ throughout. Thus, recalling Definition~\ref{dd1},
\[
 d_1 = d_1(F) = \frac{e(F)}{|F|-1} = \frac{s}{r-1}.
\]
Throughout we assume $r\ge 3$ (otherwise $F$ is an edge and everything is trivial).

Note for later that if $F$ is strictly $1$-balanced then
$F$ is $2$-connected: otherwise, it would be possible to write $F$ as $F_1\cup F_2$,
where $F_1$ and $F_2$ have at least two vertices and overlap in exactly one vertex.
But then
\[
 e(F) = e(F_1)+e(F_2) < d_1(|F_1|-1)+d_1(|F_2|-1) = d_1(|F|-1) = e(F),
\]
a contradiction.

We will prove the analogue of Theorem~\ref{th1} for nice graphs $F$ (see Definition~\ref{defnice}), and
the analogue of Theorem~\ref{th2} for all strictly $1$-balanced $F$, in Theorem~\ref{th-bal} below.
At the end of this section we will use a variation of the method to prove Theorem~\ref{thbal}.
The coupling results are slightly awkward to formulate, since we cannot directly encode copies of $F$
by an $r$-uniform hypergraph.

Let $F$ be a fixed ($t$-)graph with $r$ vertices. By an \emph{$F$-graph} $H_F$ we mean a pair $(V,E)$ where
$V$ is a finite set of vertices and $E$ is a set of distinct copies of $F$ whose
vertices are all contained in $V$. We refer to the copies as \emph{$F$-edges}.
Equivalently, an $F$-graph is an $r$-uniform labelled multi-hypergraph, where each hyperedge $h$ is labelled by one of the $r!/\aut(F)$ possible copies of $F$ on $V(h)$, and we may have two or more hyperedges with the same vertex set as long as they have different labels.

For $n\ge 1$ and $0\le \pi \le1$ we write $H_F(n,\pi)$
for the random $F$-graph with vertex set $[n]$ in which each of the
\[
 M= \binom{n}{r}\frac{r!}{\aut(F)}
\]
possible copies of $F$ (i.e., possible labelled hyperedges)
is present independently with probability $\pi$. Thus, when $F=K_r$,
an $F$-graph is exactly an $r$-uniform hypergraph, and $H_F(n,\pi)=H_r(n,\pi)$.

\begin{theorem}\label{th-bal}
Let $F$ be a fixed strictly $1$-balanced $t$-uniform hypergraph, $t\ge 2$, with $|F|=r\ge 3$ and $e(F)=s$.
Let $d_1=s/(r-1)$.
There are positive constants $\eps$ and $a$ such that if $p=p(n)\le n^{-1/d_1+\eps}$ then,
for some $\pi=\pi(n)\sim ap^s$,
we may couple $G=H_t(n,p)$ and $H_F=H_F(n,\pi)$ such that, with probability $1-o(1)$,
for every $F$-edge present in $H_F$ the corresponding copy of $F$ is present in $G$.
Furthermore, if $F$ is nice, then we may take $a=1$.
\end{theorem}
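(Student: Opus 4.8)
The plan is to mimic the proof of Theorem~\ref{th1} as closely as possible, replacing copies of $K_r$ by copies of $F$, and to isolate exactly where the hypotheses on $F$ (strict $1$-balance, $3$-connectivity, and condition (iii) when $a=1$) are used. First I would set up the analogue of Algorithm~\ref{alg}: enumerate the $M$ possible copies $F_1,\dots,F_M$ of $F$, let $E_j$ be the edge set of $F_j$, and at step $j$ compute $\pi_j=\Pr(A_j\mid\text{history})$ where $A_j=\{E_j\subset E(G)\}$; if $\pi_j\ge\pi$ test with probability $\pi/\pi_j$, recording only the yes/no answer, and if $\pi_j<\pi$ declare the corresponding $F$-edge present in $H_F$ with probability $\pi$ (coupling fails if so). The Janson/Warnke-type estimate goes through verbatim, giving
\[
 \pi_j \ge p^{|E_j\setminus R|}(1-Q_j) \ge p^{s}(1-Q_j),
 \qquad Q_j = \sum_{i\in N_{j,1}} p^{|E_i\setminus(E_j\cup R)|},
\]
where $R$ is the edge set revealed so far, a subgraph of the graph obtained from $H_F$ by replacing each $F$-edge by a copy of $F$, hence of bounded maximum degree $\Delta=n^{o(1)}$ on the high-probability event $\cB_1^c$.

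Next I would bound $Q_j$. Fix $i\in N_{j,1}$ and let $S$ be the graph on $V(E_i)$ with the edges lying in $R\cup E_j$; the contribution of $i$ is $p^{e_i}$ with $e_i\ge e(F)-e(S)$ more precisely $e_i$ is at least the number of edges of $F_i$ not in $S$. The counting bound $n^{k+o(1)}$ for the number of $i$ giving an $S$ with $k+1$ components (as in~\eqref{choices}) is unchanged. The crucial input now is strict $1$-balance: if $S$ is a proper subgraph of $F_i$ with $|S|=r$ then $e(S)\le s - (d_1 - d_1(S_0))\cdots$; cleanly, for a subgraph $S\subsetneq F$ spanning all $r$ vertices (so $S$ disconnected or missing edges), strict $1$-balance applied componentwise gives $e(S) < d_1(r - c(S))$, i.e. $e_i = s - e(S) > d_1(c(S)-1)$, and since $p\le n^{-1/d_1+\eps}$ each term with $c(S)=k+1$ components contributes at most $n^{k+o(1)}p^{>d_1 k} = n^{o(1)}\cdot n^{-\eps d_1 k}\to 0$ after summing; likewise connected $S\subsetneq F_i$ contributes $n^{o(1)}p = o(1)$. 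Hence, on $\cB_1^c$,
\[
 Q_j = o(1) + |\{i\in N: E_i\subseteq E_j\cup R\}|.
\]
Here I expect the main subtlety, and the reason $3$-connectivity (rather than $2$-connectivity) is assumed: the deterministic structural lemma replacing Lemma~\ref{l2} needs that if the union of a few copies of $F$ contains an "extra" copy $F^*$ of $F$ on a fresh $r$-set, then the $F$-graph they form contains an avoidable configuration — and this can fail at $2$-vertex overlaps, so one forbids them; $3$-connectivity of $F$ forces any two overlapping copies sharing a "missing" edge to overlap in a way that creates nullity $\ge 2$.

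Finally, call step $j$ dangerous if the set $\{i\in N: E_i\subseteq E_j\cup R\}$ is nonempty; then $\pi_j=0$ there, and if additionally the $F$-edge for $F_j$ is turned on in $H_F$ (a deadly step) the coupling fails. In that case every edge of $E_i$ lies in some $F$-edge of $H_F$, yet the $F$-edge on $F_i$ is absent. I would prove the $F$-analogue of Lemma~\ref{l2} (under $3$-connectivity, plus condition (iii) to get the sharp constant $a=1$; without (iii) one pays a constant factor and uses the thinning trick of Theorem~\ref{th2} with an extra auxiliary indicator $I_j$ per copy, exactly as in Section~\ref{sec2}) to conclude that such a configuration is "avoidable" in a suitably generalized sense, and then the hypergraph-of-overlaps analogue of Lemma~\ref{l1} shows it is whp absent. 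The hardest part is formulating and proving this deterministic extension of Lemma~\ref{l2}: one must define the right notion of avoidable configuration for $F$-graphs (connected, "complex" in the sense of having enough excess edges relative to $d_1$, bounded size), show it is whp absent by a first-moment computation using $p\le n^{-1/d_1+\eps}$, and show any extra copy of $F$ forces one — this is where condition (iii) and $3$-connectivity do their work, and where the argument is genuinely more delicate than for $K_r$, since the internal structure of $F$ (not just vertex overlaps) must be tracked. The $a=1$ refinement for nice $F$, and the fallback thinning argument giving some constant $a$ for all strictly $1$-balanced $F$, then follow the templates of Sections~\ref{sec1} and~\ref{sec2} respectively.
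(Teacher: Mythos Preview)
Your overall architecture matches the paper's proof: run Algorithm~\ref{alg} for copies of $F$, derive $\pi_j\ge p^s(1-Q_j)$, use strict $1$-balance componentwise to kill the $e_i>0$ part of $Q_j$, and then handle the $E_i\subset E_j\cup R$ terms via a structural lemma. Two points where your plan diverges from what actually works, though. First, you anticipate needing a new $d_1$-based notion of ``avoidable configuration'' for $F$-graphs; the paper avoids this entirely by passing to the underlying $r$-uniform (multi-)hypergraph $\hH_F$ and using the \emph{same} nullity-based definition as in Section~\ref{sec_prelim}. Lemma~\ref{l1} then applies verbatim, and the deterministic lemma (Lemma~\ref{l2F}) is purely about hypergraph nullity, not about $d_1$. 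Trying to build a density-based theory here would be a detour.

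Second, your attribution of the hypotheses is off. Strict $1$-balance already implies $2$-connectivity, and $2$-connectivity alone is what Lemma~\ref{l2F} uses to show that a minimal witness has no pendant hyperedges, hence is either complex or a clean $k$-cycle; this yields that the number $M_F$ of ``extra'' copies of $F$ meeting a given $F$-edge is \emph{finite} (Corollary~\ref{cMF}). The extra hypotheses in ``nice'' ($3$-connectivity \emph{and} condition~(iii)) are used only to rule out the clean-cycle case and force $M_F=0$. Correspondingly, in the non-nice case the argument is not the ``dangerous/deadly'' dichotomy you sketch: one needs the quantitative bound $|D_j|\le M_F$, so that with thinning $\pi_j\ge cp^s(1-cQ_j)\ge cp^s(1-c(M_F+o(1)))$, and one simply chooses $c=1/(2(M_F+1))$. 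Your framing ``any extra copy forces an avoidable configuration'' is true only for nice $F$; for general strictly $1$-balanced $F$ the correct statement is ``only boundedly many extra copies can meet $F_j$'', and that is the input the thinning needs. (Minor algebra: your display $n^{k+o(1)}p^{>d_1k}=n^{o(1)}\cdot n^{-\eps d_1 k}$ has the sign wrong; $n^k p^{d_1k}=n^{\eps d_1 k}$ grows, and it is the strict gap $e_i\ge d_1k+\gamma$ for some fixed $\gamma>0$ that makes the term $o(1)$.)
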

In other words, in the same one-sided sense as in Theorem~\ref{th1}, and up to a small
change in density, the copies of $F$ in $H_t(n,p)$ (i.e., $G(n,p)$ when $t=2$)
are distributed randomly as if each was present independently.

The slightly awkward statement of Theorem~\ref{th-bal} `does the job' with respect to $F$-factors, for nice $F$, giving Theorem~\ref{thnice} as a corollary.

\begin{proof}[Proof of Theorem~\ref{thnice}]
Fix a nice ($t$-uniform hyper)graph $F$ with $r$ vertices and $s$ edges, and define $p_0$ as in the statement of the theorem, noting that $p_0$ is asymptotically the value of $p$ for which
each vertex of $G(n,p)$ (or $H_t(n,p)$) is on average in $\log n$ copies of $F$.
Then for $p=(1+\gamma) p_0$, say, the $\pi$ in Theorem~\ref{th-bal} satisfies
$\pi\sim p^s=(1+\gamma)^s (\aut(F)/r!) \pi_0$, where $\pi_0$ is defined in Theorem~\ref{Kthm}.
Let $\hsH_F$ be the random simple hypergraph obtained from $H_F$ by replacing each $F$-edge by a hyperedge with the same vertex set (i.e., forgetting the labels) and removing any multiple edges. Then $\hsH_F$
has the distribution of $H_r(n,\pi')$ for
\[
 \pi'=1-(1-\pi)^{r!/\aut(F)} \sim (r!/\aut(F)) \pi \sim (1+\gamma)^s \pi_0,
\]
so we have $\pi'\ge (1+\gamma/2)\pi_0$, say, for $n$ large enough. Thus by Kahn's Theorem~\ref{Kthm}, whp $H_n(r,\pi')$
has a perfect matching. When this holds and the coupling described in Theorem~\ref{th-bal} succeeds,
for each hyperedge in the matching we find some copy of $F$ in $H_t(n,p)$
with the same vertex set, leading to an $F$-factor. The reverse bound is (as is well
known) immediate: if $p=(1-\gamma)p_0$ then whp there will be vertices of $H_t(n,p)$ not in any
copies of $F$.
\end{proof}

To prove Theorem~\ref{th-bal}
we will follow the strategy of the proof of Theorem~\ref{th1} as closely as possible; the main
complication will be in the deterministic part, namely the analogue of Lemma~\ref{l2}.

Given an $F$-graph $H_F$, let $\hH_F$ be the underlying $r$-uniform multi-hypergraph, where we replace
each $F$-edge by a hyperedge formed by the vertex set of $F$ (i.e., forget the labels), and let $G(H_F)$ be the simple
($t$-)graph\footnote{From now on we mostly write just `graph', only occasionally reminding the reader of the case $t\ge 3$.} formed by taking the graph union of the copies of $F$ present as $F$-edges in $H_F$.
We define \emph{avoidable configurations}
in (multi)-hypergraphs as before, now noting that two hyperedges with the same vertex set form
an avoidable configuration (the nullity is $r-1\ge 2$). We say that $H_F$ contains
an avoidable configuration if $\hH_F$ does.

The next deterministic lemma describes how the union of copies of $F$ can create an `extra' copy $F_0$.

\begin{lemma}\label{l2F}
Let $F$ be a $2$-connected ($t$-)graph with $r$ vertices,
let $H_F$ be an $F$-graph, and let $F_0$ be a copy of $F$, not present as an $F$-edge in $H_F$,
such that $F_0\subset G(H_F)$. Then either (i) $\hH_F$ contains an avoidable configuration,
or (ii) $\hH_F$ contains a clean $k$-cycle $H$ for some $2\le k\le e(F)$, with every edge
of $F_0$ contained in some hyperedge in $H$. Furthermore, if $F$ is nice, then (i) holds.
\end{lemma}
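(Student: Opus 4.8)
The plan is to follow the proof of Lemma~\ref{l2} as closely as possible; since for a general $F$ we cannot count edges of $F_0$ as freely as for $K_r$, the argument becomes more structural. As $F_0\subset G(H_F)$, every (graph) edge of $F_0$ lies inside some $F$-edge of $H_F$, so we may pick a \emph{minimal} collection $h_1,\dots,h_t$ of $F$-edges whose union of copies contains $F_0$. Minimality gives $t\le e(F_0)=s$ and, for each $i$, an edge $e_i$ of $F_0$ covered by $h_i$ alone. If $t=1$ then $V(F_0)=V(h_1)$ and $E(F_0)\subseteq E(h_1)$, so $F_0=h_1$ as labelled copies of $F$, contradicting that $F_0$ is not an $F$-edge; hence $t\ge 2$. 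Let $C\subseteq\hH_F$ be the sub(multi)hypergraph whose hyperedges are $V(h_1),\dots,V(h_t)$. The first point is that $C$ is connected: otherwise its components partition the edge set of $F_0$ into nonempty parts (a vertex shared by two hyperedges from different components is impossible, and by the $e_i$ no component is edgeless), and separating one part from the rest exhibits $F_0$ as a union of two edge-disjoint subgraphs meeting in at most one vertex, contradicting that $F$ is $2$-connected.

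Next I would prove property $(\star)$: every $h_i$ meets $\bigcup_{j\ne i}V(h_j)$ in at least two vertices. If not, some $h_i$ meets the rest in a single vertex $v$. Then every vertex of $V(h_i)\setminus\{v\}$ lies in no other hyperedge, so every vertex of $(V(h_i)\cap V(F_0))\setminus\{v\}$ has all its $F_0$-neighbours inside $V(h_i)\cap V(F_0)$. Running through the possibilities ($v\notin V(F_0)$; $v\in V(F_0)$ with $V(F_0)\subseteq V(h_i)$; $v\in V(F_0)$ with $V(F_0)\not\subseteq V(h_i)$), this forces either $V(F_0)=V(h_i)$ --- in which case, as $t\ge 2$, some $e_j$ with $j\ne i$ has both endpoints in $V(h_i)$, contradicting the choice of $v$ --- or that $v$ is a cut vertex of $F_0$, contradicting $2$-connectedness. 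So $(\star)$ holds.

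A short count now completes the dichotomy. Writing $\deg_C(v)$ for the number of hyperedges of $C$ at $v$, we have $\sum_v(\deg_C(v)-1)=rt-|V(C)|=t-1+n(C)$ (using $|V(C)|=(r-1)t+1-n(C)$). Let $D=\{v:\deg_C(v)\ge 2\}$. By $(\star)$ each $h_i$ contains at least two vertices of $D$, so $\sum_{v\in D}\deg_C(v)\ge 2t$, whence $|D|=\sum_{v\in D}\deg_C(v)-\sum_{v\in D}(\deg_C(v)-1)\ge 2t-(t-1+n(C))=t+1-n(C)$; since also $|D|\le\sum_{v\in D}(\deg_C(v)-1)=t-1+n(C)$, we get $n(C)\ge 1$. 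If $n(C)\ge 2$ then $C$ is connected and complex with $t\le s\le 2\binom{r}{2}$ hyperedges, i.e.\ an avoidable configuration, so (i) holds. If $n(C)=1$, the inequalities are tight: $|D|=t$, every vertex of $D$ has degree exactly $2$, and each $h_i$ contains exactly two vertices of $D$. Form the $2$-regular multigraph $B$ on vertex set $D$ with one edge joining the two $D$-vertices of each $h_i$; since a vertex shared by two hyperedges lies in $D$, walks in $C$ project to walks in $B$, so (as $C$ is connected) $B$ is connected, hence a $t$-cycle. Reading this off, $C$ is a clean $t$-cycle --- the vertices of $D$ are its junctions and the remaining $r-2$ vertices of each $h_i$ form its interior --- with $2\le t\le e(F)$ and every edge of $F_0$ inside some hyperedge of $C$; this is case (ii), with $H=C$.

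For the final assertion, suppose $F$ is nice, so in particular $3$-connected, and that (ii) holds with $C$ a clean $t$-cycle as above. Fix a hyperedge $h_i$ with junctions $v,v'$. Every interior vertex of $h_i$ that lies in $F_0$ has all its $F_0$-neighbours in $V(h_i)$, so deleting $\{v,v'\}$ from $F_0$ separates $A_i:=(V(h_i)\cap V(F_0))\setminus\{v,v'\}$ from $V(F_0)\setminus V(h_i)$; since $F$ is $3$-connected, this cutset of size at most $2$ cannot have nonempty sets on both sides. If $V(F_0)\setminus V(h_i)=\emptyset$ then $V(F_0)=V(h_i)$, and since the edges of $F_0$ are covered by the single copy $h_i$ we get $F_0=h_i$, a contradiction; so $A_i=\emptyset$, for every $i$. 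But then every vertex of $F_0$ is a junction and every edge of $F_0$ joins consecutive junctions, so $F_0$ is a subgraph of a cycle and has $\Delta(F_0)\le 2$, contradicting $\Delta(F)\ge 3$. Hence (i) holds. The main obstacle is establishing $(\star)$ cleanly (the degenerate configurations where $V(F_0)$ sits inside one $V(h_i)$) and then the structural step identifying a nullity-$1$ hypergraph satisfying $(\star)$ as a clean cycle; these are exactly where the $2$- and $3$-connectedness of $F$ enter.
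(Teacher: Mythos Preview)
Your derivation of the dichotomy between (i) and (ii) is correct and close in spirit to the paper's, though you give a more explicit degree-counting argument to pin down the clean cycle where the paper simply observes that a unicyclic hypergraph with no pendant hyperedges must be a clean cycle. One minor quibble: in your connectedness argument the two components of $C$ partition $V(F_0)$ into two \emph{disjoint} nonempty sets (not sets meeting in at most one vertex), which already disconnects $F_0$.

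The real problem is in the ``nice'' case. When you reach $V(F_0)=V(h_i)$ you assert that ``the edges of $F_0$ are covered by the single copy $h_i$'' and conclude $F_0=h_i$. This is unjustified and in fact false when $t=2$. In a clean $2$-cycle the two junctions $v,v'$ lie in both $h_i$ and the other hyperedge $h_j$, and the unique edge $e_j$ of $F_0$ covered only by $h_j$ must be $vv'$; thus $F_0\subset h_i+vv'$ with $vv'\notin E(h_i)$, so $F_0\ne h_i$ yet $F_0$ is a perfectly valid extra copy of $F$. This is precisely where the paper invokes condition~(iii) of ``nice'': from $F_0\subset F_i+vv'$ one sees that $F$ can be turned into an isomorphic graph by adding one edge and deleting one, contradicting niceness. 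You never use condition~(iii), and it cannot be dispensed with: take $F=K_5$ minus an edge, which is $3$-connected and strictly $1$-balanced but fails~(iii). Two copies sharing exactly the two endpoints of the missing edge of the first copy (with that edge present in the second) form a clean $2$-cycle with no avoidable configuration, yet their union contains an extra copy of $F$ on the vertex set of the first. So for such $F$ conclusion~(i) can genuinely fail, and your argument --- which would purport to prove~(i) for all $3$-connected strictly $1$-balanced $F$ --- must break down exactly here.
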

\begin{proof}
We may assume that $H_F$ is minimal with the given property. Let its $F$-edge-set be $F_1,\ldots,F_j$,
so these are distinct graphs isomorphic to $F$ whose union contains $F_0$. Let $h_1,\ldots,h_j$
be the corresponding ($r$-element) hyperedges, so $h_i=V(F_i)$, and let $H=\hH_F$,
a (multi-)hypergraph with hyperedges $h_1,\ldots,h_j$.
Since $F$ is connected, it is easy to see that $H$ is connected.
Suppose that $H$ has a pendant hyperedge, i.e., a hyperedge $h$ that meets $H'=H-h$ only in a single
vertex $v$. Then, by minimality of $H$, at least one edge of $F_0$ is included in $h$, and at least
one edge of $F_0$ is included in $H'$. In particular, $F_0$ includes at least one vertex \emph{other than $v$}
in each of $h$ and $H'$. Since $h$ and $H'$ meet only in $v$, it follows that $v$ is a cut-vertex in $F_0$,
contradicting the assumption that $F$ is $2$-connected.

So we may assume that $H$ has no pendant hyperedges.
By minimality, every hyperedge of $H$ contributes at least one edge to $F_0$,
so $j\le e(F)\le \binom{r}{t}\le 2^r$.

If $H$ is complex, then $H$ is an avoidable configuration and we are done.
Suppose not, so in particular $H$ has no repeated hyperedges.
Certainly $e(H)\ge 2$ (since $F_1\ne F_0$), so $H$ cannot be a tree.
Thus $H$ is unicyclic, and in fact it is a clean $k$-cycle
for some $k\ge 2$ (see Figure~\ref{fig_cycles}). Note that $k=e(H)=j\le e(F)$.
This completes the proof of the main statement. It remains only to deduce a contradiction
in the case that $F$ is nice (so $H$ must have been complex after all).

So suppose that $F$ is nice.
Let $C$ be the ($2$-)graph\footnote{$C$ is a graph even if $t\ge 3$.} cycle corresponding to $H$, so each hyperedge $h$ of $H$
consists of two consecutive vertices of $C$ and $r-2$ `external' vertices.
Then $G(H_F)$ cannot contain any ($t$-)edges within $V(C)$ other than the edges of the $2$-graph $C$.
Since $F$ is $3$-connected, it is not a subgraph of $C$, so $F_0$ contains a vertex $v$ outside $C$. 
Assume without loss of generality that $v\in h_1$.
Let $x$ and $y$ be the vertices of $C$ in $h_1$. Then $F_0\subset G(H_F)=F_1\cup \bigcup_{j=2}^k F_j$,
a union of two ($t$-)graphs whose vertex sets intersect in $\{x,y\}$.
Since $F_0$ is 3-connected, it follows that $V(F_0)\subset V(F_1)=h_1$, so in fact these
two sets of $r$ vertices are the same. Now every $F_j$ contributes
at least one edge to $F_0$. For $j>1$ this edge can only be $xy$,
so we conclude that $k=2$ and that $F_0\subset F_1+xy$ (so also $t=2$). Hence it is possible
to transform $F$ into an isomorphic graph by adding one edge and then deleting
one edge. Since $F$ is nice, this is impossible.
\end{proof}

\begin{definition}
Let $H_F$ be an $F$-graph and let $F_1$ be an $F$-edge of $H_F$. We say that
$F_0$ is an \emph{extra copy of $F$ in $H_F$ meeting $F_1$} if

(i) $F_0$ is not present as an $F$-edge in $H_F$,

(ii) all ($t$-)edges of $F_0$ are present in $G(H_F)$, and

(iii) $F_0$ and $F_1$ share at least one ($t$-)edge.

\noindent
We write $N_F(H_F,F_1)$ for the number of extra copies of $F$ in $H_F$ meeting $F_1$.
\end{definition}

The first two conditions above express that when we take the union of the copies of $F$ encoded
by $H_F$, then $F_0$ appears as an `extra' copy of $F$.

\begin{definition}
Let $M_F$ denote the supremum
of $N_F(H_F,F_1)$ over all $F$-graphs $H_F$ and $F_1\in E(H_F)$, where
$H_F$ contains no avoidable configuration.
\end{definition}

In this notation, Lemma~\ref{l2} says that for $r\ge 4$, $M_{K_r}=0$. Similarly,
Lemma~\ref{l2F} has the following corollary.

\begin{corollary}\label{cMF}
If $F$ is $2$-connected, then $M_F$ is finite. If $F$ is nice, then $M_F=0$.
\end{corollary}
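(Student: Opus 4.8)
The plan is to deduce Corollary~\ref{cMF} directly from Lemma~\ref{l2F}, since essentially all the real work has already been done there. Recall that $M_F$ is the supremum of $N_F(H_F,F_1)$ over $F$-graphs $H_F$ containing no avoidable configuration, where $N_F(H_F,F_1)$ counts the extra copies of $F$ meeting a fixed $F$-edge $F_1$. So fix such an $H_F$ (with no avoidable configuration) and an $F$-edge $F_1$, and fix any extra copy $F_0$ of $F$ in $H_F$ meeting $F_1$. I would first pass to the sub-$F$-graph consisting of $F_1$ together with those $F$-edges of $H_F$ that share a (graph) edge with $F_0$; only these can contribute edges to $F_0$, and discarding the others cannot create an avoidable configuration. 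Then Lemma~\ref{l2F}, applied with this sub-$F$-graph (whose underlying hypergraph still contains no avoidable configuration), forces alternative (ii): $\hH_F$ restricted to these $F$-edges is a clean $k$-cycle $H$ with $2\le k\le e(F)$, every graph edge of $F_0$ lying in some hyperedge of $H$.

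The finiteness of $M_F$ when $F$ is $2$-connected then follows from a counting bound. In the clean $k$-cycle $H$, each of the $k$ hyperedges has $r-2$ `external' vertices, and the remaining vertices of $H$ form a graph $k$-cycle $C$ with $k\le e(F)$; in particular $|H|\le e(F)(r-1)$, a bounded number. Crucially $F_0$ shares an edge with $F_1$, so $F_0$ meets $V(F_1)$; and by alternative (ii) every vertex of $F_0$ lies in some hyperedge of $H$, hence in $V(H)$. Thus $V(F_0)\subseteq V(F_1)\cup\bigl(V(H)\setminus V(F_1)\bigr)$ where the first set has size $r$ and the second has bounded size. The number of possible vertex sets for $F_0$ meeting $V(F_1)$ is therefore bounded by a function of $r$ and $e(F)$ alone, and for each vertex set there are at most $r!$ copies of $F$; so $N_F(H_F,F_1)$ is bounded uniformly, giving $M_F<\infty$.

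For the second assertion, that $M_F=0$ when $F$ is nice: by the `Furthermore' clause of Lemma~\ref{l2F}, if $F$ is nice then applied to the sub-$F$-graph above it must be alternative (i) that holds, i.e.\ that sub-$F$-graph (hence $H_F$) contains an avoidable configuration — contradicting our assumption on $H_F$. Hence no extra copy $F_0$ meeting $F_1$ can exist, so $N_F(H_F,F_1)=0$ for every admissible $H_F$ and $F_1$, and $M_F=0$. (One should also check the degenerate point that a nice graph is in particular $2$-connected, so that Lemma~\ref{l2F} applies at all; this was already established in the text, since nice graphs are strictly $1$-balanced and hence $2$-connected, and are moreover $3$-connected by definition.)

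I do not expect any genuine obstacle here: Lemma~\ref{l2F} is precisely engineered to give this corollary, and the only thing to be careful about is the reduction to the sub-$F$-graph — one must check that restricting to the $F$-edges sharing an edge with $F_0$ preserves `contains no avoidable configuration' (it does, since a sub-$F$-graph of an avoidable-configuration-free $F$-graph is still avoidable-configuration-free, avoidable configurations being subhypergraph-monotone upward) and still has $F_0\subseteq G(H_F)$ (it does, by construction, since every edge of $F_0$ came from some $F$-edge sharing that edge with $F_0$). The counting step for finiteness is entirely routine once one observes that the clean cycle $H$ produced by Lemma~\ref{l2F} has boundedly many vertices.
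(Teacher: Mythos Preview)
Your argument for the \emph{nice} case is fine and matches the paper's. The gap is in the finiteness argument for general $2$-connected $F$.

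You correctly deduce from Lemma~\ref{l2F} that each extra copy $F_0$ meeting $F_1$ has all its vertices inside some clean $k$-cycle $H\subseteq\hH_F$ of bounded size. But then you write ``$V(F_0)\subseteq V(F_1)\cup(V(H)\setminus V(F_1))$, where the second set has bounded size, so the number of possible vertex sets for $F_0$ is bounded''. This does not follow, because the clean cycle $H$ you produce \emph{depends on $F_0$}: you built it by first restricting to the $F$-edges sharing an edge with $F_0$, and then applying the lemma. Different extra copies $F_0$ could in principle yield different clean cycles $H$, and nothing you have said rules out there being unboundedly many such $H$'s in $\hH_F$, each contributing its own bounded batch of extra copies. (Incidentally, Lemma~\ref{l2F} only says the restricted hypergraph \emph{contains} a clean cycle, not that it \emph{is} one; your restriction need not be minimal.)

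The missing step, which the paper supplies, is to pin down $H$ using $F_1$ rather than $F_0$. Since $F_0$ and $F_1$ share an edge, and that edge of $F_0$ lies in some hyperedge of $H$, the hyperedge $h=V(F_1)$ meets $H$ in at least two vertices. If $h\notin E(H)$ then $H\cup\{h\}$ is connected and complex, hence an avoidable configuration in $\hH_F$, a contradiction; so $h\in E(H)$. Now if two distinct clean cycles $H,H'$ both arose (from different $F_0$'s), they would share the hyperedge $h$, and their union would again be connected and complex, another avoidable configuration. Hence $H$ is \emph{unique}, and your counting inside a single bounded-size $H$ becomes legitimate.
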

\begin{proof}
The second statement is immediate from Lemma~\ref{l2F} and the definition of $M_F$.
For the first, let $F_1$ be an $F$-edge of an
$F$-graph $H_F$ containing no avoidable configuration, and let $F_0$ be an extra
copy of $F$ in $H_F$ meeting $F_1$.
Then, by Lemma~\ref{l2F}, the hypergraph $\hH_F$ contains
a clean $k$-cycle $H$ for some $2\le k\le e(F)$, with each ($t$-)edge of $F_0$
contained in a hyperedge of $H$. Consider the hyperedge $h=V(F_1)$ corresponding to $F_1$.
Then $F_0$ and $F_1$ share an edge $e$, which must be contained
in some hyperedge in $H$. So $h$ shares at least two vertices with $H$. If $h$ is not already
a hyperedge of $H$, it follows that $H\cup \{h\}\subset \hH_F$ is complex
and thus an avoidable configuration,
contradicting our assumptions. Hence $h$ is indeed a hyperedge of $H$.

For any extra copy $F_0$ meeting $F_1$ we obtain a (unicyclic) witness $H$ as above.
Each $H$ can be a witness
for at most $O(1)$ copies $F_0$, since $H$ has $O(1)$ vertices and so contains
$O(1)$ subgraphs isomorphic to $F$. On the other hand, if $\hH_F$ contains two distinct witnesses
then, since they share a hyperedge, their union is complex and has at most $2k\le 2e(F)\le 2^{r+1}$ hyperedges,
again contradicting our assumption that $H_F$ contains no avoidable configuration.
\end{proof}

We are now ready to prove Theorem~\ref{th-bal}.
\begin{proof}[Proof of Theorem~\ref{th-bal}]
We follow the proof of Theorem~\ref{th1} (for the case $F$ nice) or Theorem~\ref{th2}
as closely as possible. In particular, we follow Algorithm~\ref{alg} \emph{mutatis mutandis},
testing copies of $F$ for their presence in $G(n,p)$ (or $H_t(n,p)$ for the $t$-graph case)
and simultaneously constructing
a random $F$-graph $H_F$ with the distribution of $H_F(n,\pi)$.

As before, it is convenient to assume that $p\le n^{-1/d_1+o(1)}$. 
Then $\pi\le n^{-(r-1)+o(1)}$, so the expected degrees in $H_F(n,\pi)$ or its underlying hypergraph are at most $n^{o(1)}$.
Writing $G(H_F)$ for the ($t$-)graph associated to $H_F$,
it follows as before that, for some $\Delta=n^{o(1)}$, the event $\cB_1$ that any vertex has degree more 
than $\Delta$ in $G(H_F)$ has probability $o(1)$. Furthermore, by (a trivial modification of) Lemma~\ref{l1}, the event
$\cB_2$ that $H_F$ contains an avoidable configuration has probability $o(1)$.

The core of the argument is exactly as before: we test the edge-sets $E_1,\ldots,E_M$
of the possible copies $F_1,\ldots,F_M$ for their presence
in $G=G(n,p)$, or $G=H_t(n,p)$ for $t\ge 3$, one-by-one. Our coupling only fails if the conditional probability $\pi_j$
that the $j$-th test succeeds is smaller than $\pi$, and the corresponding $F$-edge
$F_j$ is present in the random $F$-graph $H_F$ that we are constructing; we write
$\cF_j$ for this latter event. We aim to show that in this case, $\cB_1\cup\cB_2$ holds.
We argue by contradiction, assuming that $\cF_j$ holds, but neither $\cB_1$ nor $\cB_2$
does; our aim is to show that then $\pi_j\ge \pi$.
Note that under these assumptions,
$R\cup E_j\subset G(H_F)$ and so $\Delta(R\cup E_j)\le \Delta$.

The derivation of \eqref{key} did not use \emph{any}
properties of the $E_i$, except to bound $|E_j\setminus R|$ by $|E_j|=e(K_r^{(t)})$
in the last step. Thus we have
\[
 \pi_j \ge p^s (1-Q_j)
\]
with $Q_j$ defined as in \eqref{Qdef}, as before. For $i\in N_1=N_{j,1}$, we let $S$ be the ($t$-)graph on $V(F_i)$ formed by all ($t$-)edges
in $E_i=E(F_i)$ that are also contained in $R\cup E_j$,
and write $e_i=|E_i\setminus E(S)|=|E_i|-e(S)$; thus the contribution from this $i\in N_1$
to $Q_j$ is precisely $p^{e_i}$.

We split the contribution to $Q_j$ into two types, according to whether $e_i>0$ or not, writing
\[
 Q_j = \tQ_j + |D_j|
\]
where, as before,
\[
 D_j=\{i\in N_{j,1}: E_i\subset R\cup E_j\}.
\]
As before, we can split the sum $\tQ_j$ according to the number $k+1$ of components 
and number $m$ of edges of $S$, a non-trivial subgraph of $F$. Since we assume $\Delta(R\cup E_j)\le \Delta$,
we obtain as before (see \eqref{choices}) that
\begin{equation}\label{tQj}
 \tQ_j \le \sum_{k,m} r n^k ((t-1)\Delta)^{r-1-k} p^{s-m}.
\end{equation}
Suppose $S$ has $k+1\ge 2$ components, with $r_1,\ldots,r_{k+1}$ vertices
and $s_1,\ldots,s_{k+1}$ edges, respectively.
Each component is a subgraph of $F_i$, which is $1$-balanced, so
\begin{equation}\label{mbd}
 m = e(S) = \sum_\ell s_\ell \le \sum_\ell d_1(r_\ell-1) = d_1(r-1-k) = s-d_1 k.
\end{equation}
In fact, $F$ is strictly $1$-balanced, so we have a strict inequality if any $r_i$ is
in the range $2\le r_i\le r-1$. As before, $S$ contains at least one edge, so we cannot have
all $r_i$ equal to one. Thus, when $S$ is disconnected, i.e., $k\ge 1$, we have
a strict inequality in \eqref{mbd}. When $k=0$, by the way we split the sum $Q_j$ we have $e_i=s-e(S)>0$,
so we have a strict inequality. It follows that all terms in the sum in \eqref{tQj}
are at most
\[
 n^{k+o(1)}p^{d_1 k +1} \le n^{-1/d_1+o(1)}=o(1),
\]
so $\tQ_j=o(1)$.

Turning to $|D_j|$, if $i\in D_j$ then the $F$-edge $F_i$ is not present in $H_F$ (since $i\in N$, so
by the definition of the algorithm we did not include $F_i$ as an $F$-edge of $H_F$).
On the other hand, since $\cF_j$ holds, the $F$-edge corresponding to $F_j$
is present, and $R\cup E_j\subset G(H_F)$. Thus $F_i$ is an extra copy of $F$ in $H_F$ which (by definition
of $N_{j,1}$) meets $F_j$. We assume $\cB_2$ does not hold, so the number of possible such $i$ is at most $M_F$. In conclusion,
\[
 Q_j \le o(1) + M_F.
\]

If $F$ is nice then $M_F=0$ by Corollary~\ref{cMF}, so $Q_j=o(1)$ and we are done.
For general strictly $1$-balanced $F$, we know that $F$ is $2$-connected, so $M_F$ is finite
by Corollary~\ref{cMF}.
Thus we may bound $Q_j$ by $C=M_F+1$, say. Now we let $c=1/(2C)$ and introduce
extra tests (one per copy of $F$) as in the proof of Theorem~\ref{th2}.
In this case we have $\pi_j\ge cp^s(1-cQ_j)\ge cp^s(1-1/2) = ap^s$, for $a=c/2$,
so (if neither $\cB_1$ nor $\cB_2$ holds), we have $\pi_j\ge \pi$, as required.
\end{proof}

A slight variant of the proof above, with almost identical arguments but different
parameters, yields Theorem~\ref{thbal}.

\begin{proof}[Proof of Theorem~\ref{thbal}]
Fix $F$ which is $1$-balanced, but need not be strictly $1$-balanced. Define $r=|F|$, $s=e(F)$
and $d_1=d_1(F)$ as before. Pick a constant $a$ such that $d_1a>2$, and set
\[
 p = (\log n)^a n^{-1/d_1} \text{\qquad and\qquad} \pi=C (\log n) n^{-(r-1)},
\]
where the constant $C$ is chosen large enough that the random $F$-graph $H_F=H_F(n,\pi)$ (or rather, its
underlying hypergraph), whp contains a perfect matching;
such a constant exists by the result of Johansson, Kahn and Vu~\cite{JKV}, and Theorem~\ref{Kthm} gives an explicit value.
Note that we may write $\pi = c p^s/2$ where
\[
 c=\Theta((\log n)^{1-as}) = \Theta((\log n)^{1-ad_1(r-1)}).
\]

We follow the proof of Theorem~\ref{th-bal} above, in particular in the form with additional 
tests with probability $c$ as in the proof of Theorem~\ref{th2}. Since the expected degrees in $H_F$ are
of order $\log n$, we may take $\Delta=O(\log n)$. As before, the event $\cB_1$ that $G(H_F)$
has maximum degree more than $\Delta$, and the event $\cB_2$ that $H_F$ contains
an avoidable configuration, have probability $o(1)$. To complete the proof we need only show
that when neither $\cB_1$ nor $\cB_2$ holds, but the $F$-edge corresponding to $F_j$ is included
in $H_F$, then $\pi_j\ge \pi$. As before, we have $\pi_j\ge cp^s(1-cQ_j)$,
so it suffices to show that in this case $Q_j\le 1/(2c)$.

Estimating $Q_j$ as in the proof of Theorem~\ref{th-bal}, but this time not separating out the $e_i=0$
term, we have
\[
 Q_j \le \sum_{k,m} r n^k ((t-1)\Delta)^{r-1-k} p^{s-m} \le \sum_{k=0}^{r-2} r n^k ((t-1)\Delta)^{r-1-k} p^{d_1k},
\]
where we used \eqref{mbd} (whose derivation only assumed that $F$ is $1$-balanced) to bound $s-m$ by $d_1k$,
and note as usual that the overlap graph $S$ contains at least one edge, so the number
$k+1$ of components is at most $r-t+1\le r-1$.
Now $np^{d_1}=(\log n)^{ad_1}$, while $\Delta=O(\log n)$. It follows easily that
the term $k=r-2$ dominates the sum above, so $Q_j=O((\log n)^{ad_1(r-2)+1})$. Since $ad_1>2$,
it follows that $cQ_j=o(1)$, completing the proof.
\end{proof}

Gerke and McDowell~\cite{GMcD} consider multipartite multigraph analogues of the $t=2$ case of Theorem~\ref{thbal}.
Their main focus is the `nonvertex-balanced' case, but they also prove a version for arbitrary $F$
losing a factor $n^{o(1)}$ in the edge probability. The proof above extends \emph{mutatis mutandis}
to reduce this factor to $(\log n)^{O(1)}$ when $F$ is $1$-balanced. Since this is not our main
focus, we only outline the details.

Let $F$ be a multigraph, which we will view as a graph with a positive integer weight on each edge.
Let $V(F)=\{v_1,\ldots,v_r\}$. As in \cite{GMcD}, we will look for an $F$-factor in a random
graph $G$ where we first divide the vertices of $G$ into $r$ equally sized disjoint sets $V_1,\ldots,V_r$,
and only consider copies of $F$ with each $v_i$ mapped to a vertex in $V_i$. In~\cite{GMcD}, $G$
is a random multigraph, but their formulation is exactly equivalent to the following: we take
all edges of $G$ to be present independently, and an edge between $V_i$ and $V_j$ has probability $p^{m(ij)}$,
where $m(ij)$ is the multiplicity of $v_iv_j$ in $F$. Then we look for a copy
(restricted as above) of the simple graph
underlying $F$ in this random graph $G$.

The coupling arguments above translate immediately to this setting: we are still working in a product
probability space, and if $E$ is a set of possible edges of $G$, the probability that all
are present is $p^{|E|}$ where now we count edges according to their multiplicity. Nothing else
in the argument needs changing, except the hypergraph input. Let $H$ be the random
$r$-partite $r$-graph $H_r(n,n,\ldots,n,\pi)$ where each of the $n^r$ possible hyperedges is present
independently with probability $\pi$. Then we need to know that if $\pi$ is at least some constant
times $(\log n)n^{-(r-1)}$, then whp $H$ has a complete matching. This statement follows easily
from the Johansson--Kahn--Vu argument as presented by Frieze and Karo\'nski~\cite{FK}, simply
starting with a complete multipartite hypergraph and removing edges one-by-one, rather than
starting with a complete hypergraph. This result also follows from Corollary 1.2 of
Bal and Frieze~\cite{BF}, itself a consequence of a more general result needed there.

\section{Open questions}\label{sec_q}

The motivation for this paper was to understand, in the Johansson--Kahn--Vu context in particular, the relationship
between the distribution of copies of $K_r$ in $G(n,p)$ and the random hypergraph $H_r(n,\pi)$,
$\pi\sim p^{\binom{r}{2}}$.
This rather vague question seems to make sense much more generally.
The method used here works for $p$ up to $n^{-2/r+\eps}$ for some $\eps>0$. How large is this $\eps$?
More interestingly, up to what $p$ is a result analogous to Theorem~\ref{th1} true?
It should break down when a typical edge of $G(n,p)$ has a significant probability
of being in a copy of $K_r$, since then a significant fraction of the $K_r$s in $G(n,p)$ share
edges, and these overlapping pairs are more likely in $G(n,p)$ than in $H_r(n,\pi)$.
Of course, this doesn't rule out some other interesting relationship between $G(n,p)$ and $H_r(n,\pi)$
for even larger $p$.

Turning to general graphs $F$ in place of $K_r$, and looking for sharp results, say, with
$\pi\sim p^{e(F)}$,
one might ask what the right class of graphs $F$ is. The conditions
in Theorem~\ref{th-bal}, and thus Theorem~\ref{thnice}, are what makes the proof work, and are presumably more restrictive than needed.
Strictly $1$-balanced is a natural assumption, but even without this assumption
there might still be a sensible
way to relate copies of $F$ in $G(n,p)$ to a suitable hypergraph, which might or might not be $H_r(n,\pi)$,
depending on $F$ and on the value of $p$.

In Theorem~\ref{th1}, one could ask how large a failure probability must be allowed
in the coupling. The proof as given yields $n^{-\delta}$ for some positive $\delta$,
coming from the probability that $H_r(n,\pi)$ contains an avoidable configuration.
But it could be that much smaller failure probabilities are possible. Also, what about comparing
the distributions in some different way? In particular, looking for some two-sided sense in which they
are close?

Finally, it would be interesting to know whether the simple proof of Theorem~\ref{th2} given (in a very
slightly different setting) by Kim~\cite{Kim2003}, and outlined in Section 4.1
of the draft arXiv:1802.01948v1
of the present paper, can be extended to $r\ge 4$, perhaps by some kind of induction. It's not
at all clear whether this is possible, though.

\begin{ack}
Part of this work was carried out while the author was a visitor at IMPA in Rio de Janeiro;
the author is grateful to Rob Morris and to IMPA for their hospitality. The author would like to thank
Annika Heckel for a helpful discussion of the hypergraph case, the referees for helpful suggestions, and 
the editors of \emph{Random Structures and Algorithms} for their patience with the very late revision.
\end{ack}

\end{document}